\documentclass{amsart}

\usepackage{lipsum}
\usepackage{amsfonts}
\usepackage{graphicx}
\usepackage{amsmath}
\usepackage{amssymb}
\usepackage[english]{babel}
\usepackage{tikz-cd}
\usepackage{hyperref}

\newtheorem{proposition}{Proposition}
\newtheorem{corollary}{Corollary}
\newtheorem{definition}{Definition}

\begin{document}

\title{The Cofree Functor on $G$-Sets and Its Approximations}

\author{Frank Murphy-Hernandez}
\address{Facultad de Ciencias, UNAM, Mexico City}
\email{murphy@ciencias.unam.mx}

\subjclass[2000]{Primary 20B99; Secondary 18A40}

\date{\today}

\keywords{Keywords: G-sets; cofree functors; adjoint functors; coreflective subcategories; comonads; Burnside lemma; group actions}

\begin{abstract}
We study the cofree functor on the category of group actions and examine its categorical and combinatorial properties. Motivated by this construction, we introduce a family of functors associated with subgroups and develop their theory through a corresponding coreflective subcategory of group actions. Finally, we investigate the comonad in the category of sets induced by the cofree adjunction and prove that it classifies the groups.
\end{abstract}

\maketitle

\section{Introduction}

The interplay between free and cofree constructions is a recurring theme throughout category theory. A functor is called \emph{free} when it appears as the left adjoint of the underlying functor to sets; the paradigmatic example assigns to a set the free module over an associative ring. Dually, a \emph{cofree} functor is the right adjoint to the underlying functor. While free constructions are often encountered early in one's mathematical education, their cofree counterparts tend to be more elusive: they are typically harder to construct, less explicit, and their categorical significance is frequently underexplored.

The most celebrated instance of a cofree functor arises in the theory of coalgebras: the forgetful functor from $K$-coalgebras to $K$-vector spaces admits a right adjoint, whose existence and structure were established in the seminal works of Sweedler \cite{Sweedler1969Hopf} and Abe \cite{Abe1980Hopf}. 

The existence and study of right adjoints to forgetful functors is not confined to the classical contexts of Hopf algebras. Indeed, the problem of constructing and characterizing cofree objects arises in a variety of settings. For instance, in the theory of Hopf modules and their generalizations, the work of Caenepeel, Militaru, and Zhu \cite{Caenepeel1997DOI-HOPF} explores the right adjoints associated with Doi–Hopf modules, revealing deep connections with Frobenius-type properties. More recently, Zhou \cite{Zhou2019Doi-Hopf} extended these ideas to the setting of Hom–Hopf algebras, further illustrating the pervasiveness of cofree constructions in non-classical algebraic frameworks. In a different direction, Blumberg and Hill \cite{Blumberg2017The} studied the right adjoint to the equivariant operadic forgetful functor in the context of incomplete Tambara functors, a setting of fundamental importance in modern equivariant stable homotopy theory. Even in the realm of effect algebras and quantum logic, Jenča \cite{Jenča2015A} has examined categorical constructions that rely on the existence of suitable adjoints, demonstrating the broad applicability of these ideas beyond purely algebraic contexts. These diverse examples underscore the importance of developing a systematic categorical understanding of cofree functors, which motivates the more focused study of their properties in the concrete setting of group actions, as undertaken in this article.

In the setting of group actions, the forgetful functor $U: G\text{-}\mathbf{Set} \rightarrow \mathbf{Set}$ also admits a right adjoint, the \emph{cofree $G$-set} functor $C_G$, which sends a set $X$ to the set $X^G = \operatorname{Hom}(G,X)$, which is equipped with the left $G$-action given by translation on the domain using the inverse. Although this construction is classical and appears implicitly in the literature on adjoint functors and group cohomology, a systematic study of its fine categorical structure, its combinatorial consequences, and its natural approximations has remained largely absent.

This paper undertakes precisely such a study. Our first goal is to revisit the adjunction $U \dashv C_G$ and extract from it concrete enumerative information. By identifying the fixed points of the $G$-action on $C_G(X)$ with the constant functions, we apply Burnside's lemma to the cofree $G$-set and derive elegant number-theoretic divisibility results. In particular, we recover and contextualize the classical congruence $p \mid n^p + (p-1)n$ for primes $p$ (Proposition~\ref{prop:fermat_prime}), as well as its generalization to prime powers $p^m$ (Proposition~\ref{prop:fermat_prime_power}).

A central contribution of this work is the introduction of a family of \emph{approximations} to the cofree functor, parameterized by the subgroups of $G$. For $H \leq G$, we define the \emph{truncated cofree functor}
\[
C_H^G(X) = X^{G/H},
\]
whose elements are functions indexed by the left cosets of $H$ rather than by the entire group. While $C_H^G$ resembles the full cofree functor, it fails to be right adjoint to the global underlying functor. To understand its categorical status, we introduce the \emph{$H$-trace preradical} $\operatorname{Tr}_H$, defined as the union of the images of all equivariant maps from $G/H$ into a given $G$-set. We prove that $\operatorname{Tr}_H$ is an idempotent preradical (Propositions~\ref{prop:tr_preradical} and~\ref{prop:tr_idempotent}) and, more significantly, that it serves as a coreflector onto the full subcategory $G\text{-}\mathbf{Set}_H$ of \emph{$H$-generated $G$-sets} (Proposition~\ref{prop:tr_coreflector}).

For a normal subgroup $H \trianglelefteq G$, we establish the fundamental natural isomorphism
\[
C_H^G \cong \operatorname{Tr}_H \circ C_G
\]
(Proposition~\ref{prop:CHG_iso}), revealing that the truncated cofree functor associated with $H$ can be recovered by applying the $H$-trace to the full cofree $G$-set. Consequently, $C_H^G$ is identified as the right adjoint of the underlying set functor \emph{when restricted to the coreflective subcategory} of $H$-generated $G$-sets (Proposition~\ref{prop:CHG_adjoint}).  

Finally, we study the comonad $K_G = U_G C_G$ induced by the fundamental adjunction. We show that the assignment $G \mapsto K_G$ is functorial and, moreover, fully faithful (Propositions~\ref{prop:faithfulness} and~\ref{prop:fullness}), so that isomorphisms of groups correspond exactly to isomorphisms of comonads (Corollary~\ref{cor:reconstruction}). In particular, the cofree comonad structure determines the group up to isomorphism, yielding a full embedding of the groupoid of groups into the groupoid of comonads on $\mathbf{Set}$.

The paper is organized as follows. Section~\ref{sec:prelim} collects the necessary preliminaries on categories, preradicals, coreflective subcategories, and comonads. Section~\ref{sec:cofree} defines the cofree functor and establishes its adjointness to the underlying functor. Section~\ref{sec:burnside} applies Burnside's lemma to the cofree $G$-set to derive arithmetic consequences. Section~\ref{sec:approx} introduces the approximations via subgroups, the $H$-trace preradical, and the coreflective subcategory of $H$-generated $G$-sets. Section~\ref{sec:comonads} examines the induced comonads and their invariance under isomorphisms, and proves the reconstruction theorem.

\section{Preliminaries}
\label{sec:prelim}

We denote by $\mathbf{Set}$ the category of sets and maps. For a group $G$, we denote by $G\text{-}\mathbf{Set}$ the category of $G$-sets and equivariant maps. If $X$ is a $G$-set and $I$ is a set, we denote by $X^{(I)}$ the coproduct of $I$ copies of $X$. This set is precisely the disjoint union of $I$ copies of $X$.

For a group $G$ and a subgroup $H$ of $G$, the set of left cosets $G/H$ is naturally a $G$-set. Our references for group theory are \cite{aschbacher2000finite} and \cite{rotman2012introduction}.

A preradical on a category $\mathcal{C}$ is a subfunctor of the identity functor $\mathrm{Id}_{\mathcal{C}}$. Concretely, it assigns to each object $X$ a subobject $i^X_{r(X)} \colon r(X) \to X$, and for every morphism $f \colon X \to Y$, the restriction of $f$ to $r(X)$ factors through $r(Y)$; that is, $f \circ i^X_{r(X)} = i^Y_{r(Y)} \circ r(f)$. For the theory of preradicals, see \cite{bican1974preradicals, castellini2003categorical, stenstrom1971rings}.

A reflective subcategory is a full subcategory whose inclusion functor admits a left adjoint (called the reflector). Concretely, for each object $X$, the reflector assigns an object $R(X)$ in the subcategory and a morphism $\eta_X \colon X \rightarrow R(X)$ (the unit of the adjunction) such that every morphism from $X$ to an object $A$ in the subcategory factors uniquely through $\eta_X$. This universal property is fundamental; for a detailed treatment and examples, we refer the reader to \cite{borceux1994handbook, maclane1998categories, tholen1987reflective}.

A comonad (or cotriple) on a category $\mathcal{C}$ is an endofunctor $K\colon\mathcal{C}\rightarrow\mathcal{C}$ endowed with natural transformations $\epsilon\colon K\Rightarrow 1_{\mathcal{C}}$ (counit) and $\delta\colon K\Rightarrow K^{2}$ (comultiplication) that satisfy the coassociativity and counit axioms. In particular, every adjunction $F\dashv U\colon\mathcal{D}\rightarrow\mathcal{C}$ canonically induces a comonad $FU$ on $\mathcal{D}$; the comultiplication is given by $F\eta U$, where $\eta$ is the unit of the adjunction, and the counit is simply the adjunction counit $FU\rightarrow 1_{\mathcal{D}}$. We refer the reader to \cite{maclane1998categories} and \cite{borceux1994handbook} for further details.

For comonads $(K,\epsilon,\delta)$ and $(K',\epsilon',\delta')$ on a category $\mathcal{C}$, a morphism of comonads is a natural transformation $\phi\colon K \Rightarrow K'$ such that the following diagrams commute:
\[
\begin{tikzcd}[column sep=4em, row sep=4em]
K \ar[r, "\phi"] \ar[dr, "\epsilon"'] & K' \ar[d, "\epsilon'"] \\
& 1_{\mathcal{C}}
\end{tikzcd}
\qquad
\begin{tikzcd}[column sep=4em, row sep=4em]
K \ar[r, "\phi"] \ar[d, "\delta"] & K' \ar[d, "\delta'"] \\
K^2 \ar[r, "\phi\phi"] & K'^2
\end{tikzcd}
\]
Equivalently, $\phi$ preserves the comultiplication and counit in the sense of \cite{bohm2008monads}; further details on the dual theory for monads can be found in \cite{borceux1994handbook}.

\section{Cofree Functor}
\label{sec:cofree}

For a group $G$, we define a functor $C_G$ from the category of sets to the category of $G$-sets. This functor will be the right adjoint to the forgetful functor $U_G$.

\begin{definition}
Let $G$ be a group. For a set $X$, we define $C_G(X)$ as the set $X^G$ of all functions from $G$ to $X$. We endow $C_G(X)$ with a $G$-action given by
\[
(a\phi)(g) = \phi(a^{-1}g),
\]
for all $\phi \in X^G$ and $a, g \in G$.

For a map $\alpha \colon X \longrightarrow Y$, we define $C_G(\alpha) \colon C_G(X) \longrightarrow C_G(Y)$ by
\[
C_G(\alpha)(\phi)(g) = \alpha(\phi(g)),
\]
for all $\phi \in X^G$ and $g \in G$.
\end{definition}

\begin{proposition}\label{prop:cofree_Gset}
Let $G$ be a group and $X$ a set. Then $C_G(X)$ is a $G$-set.
\end{proposition}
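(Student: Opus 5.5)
To prove Proposition~\ref{prop:cofree_Gset}, I will check directly that the rule $(a\phi)(g)=\phi(a^{-1}g)$ satisfies the two axioms of a left group action on $X^G$. First I note that the rule is well defined: for fixed $a\in G$ and $\phi\in X^G$, the assignment $g\mapsto \phi(a^{-1}g)$ is again a function $G\to X$. It is simply the composite $\phi\circ\lambda_{a^{-1}}$, where $\lambda_{a^{-1}}\colon G\to G$ denotes left multiplication by $a^{-1}$. So the rule defines a map $G\times X^G\to X^G$.

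The identity axiom is immediate. For every $g\in G$ we have $(e\phi)(g)=\phi(e^{-1}g)=\phi(g)$, hence $e\phi=\phi$.

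For compatibility, take $a,b\in G$ and $\phi\in X^G$, and evaluate both sides at an arbitrary $g$. On one side,
\[
(a(b\phi))(g)=(b\phi)(a^{-1}g)=\phi(b^{-1}a^{-1}g).
\]
On the other side,
\[
((ab)\phi)(g)=\phi((ab)^{-1}g)=\phi(b^{-1}a^{-1}g).
\]
The two values agree for every $g$, so the functions are equal, and $a(b\phi)=(ab)\phi$.

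The only step needing care is this compatibility computation. The point is to apply the outer action first, substituting $a^{-1}g$ into the argument of $b\phi$, and to use $(ab)^{-1}=b^{-1}a^{-1}$. This is exactly why the definition uses $a^{-1}$: without the inverse one would obtain a right action rather than a left one.
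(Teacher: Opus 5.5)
Your proof is correct and follows the paper's own argument: you verify the identity axiom and the compatibility axiom $a(b\phi)=(ab)\phi$ pointwise, using $(ab)^{-1}=b^{-1}a^{-1}$. The paper does not separately address well-definedness, but your remark on it is harmless.
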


\begin{proof}
Take $a, b, g \in G$ and $\phi \in X^G$. Then
\[
((ab)\phi)(g) = \phi((ab)^{-1}g) = \phi(b^{-1}a^{-1}g) = (b\phi)(a^{-1}g) = (a(b\phi))(g).
\]
Moreover,
\[
(e\phi)(g) = \phi(e^{-1}g) = \phi(eg) = \phi(g).
\]
Therefore $C_G(X)$ is a $G$-set.
\end{proof}

From the preceding proposition, it follows that $C_G$ sends sets to $G$-sets.

\begin{proposition}\label{prop:cofree_functor}
Let $G$ be a group. The assignment $C_G \colon \mathbf{Set} \longrightarrow G\text{-}\mathbf{Set}$ is functorial.
\end{proposition}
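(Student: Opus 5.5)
To prove Proposition~\ref{prop:cofree_functor}, I will check three things in turn. First, for each map $\alpha \colon X \to Y$, the map $C_G(\alpha)$ is a morphism in $G\text{-}\mathbf{Set}$, i.e.\ it is $G$-equivariant. Second, $C_G$ preserves identities. Third, $C_G$ preserves composition. Proposition~\ref{prop:cofree_Gset} already guarantees that $C_G$ sends objects to objects, so only these morphism-level statements remain. Note that $C_G(\alpha)(\phi) = \alpha \circ \phi$ is postcomposition with $\alpha$, while the action is precomposition with left translation by $a^{-1}$. All three verifications reduce to the fact that pre- and postcomposition commute, together with associativity of composition of functions.

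For equivariance, I fix $a, g \in G$, $\phi \in X^G$ and $\alpha \colon X \to Y$, and compute both sides at $g$:
\[
C_G(\alpha)(a\phi)(g) = \alpha\bigl((a\phi)(g)\bigr) = \alpha\bigl(\phi(a^{-1}g)\bigr) = C_G(\alpha)(\phi)(a^{-1}g) = \bigl(a\, C_G(\alpha)(\phi)\bigr)(g).
\]
Since $g$ is arbitrary, this gives $C_G(\alpha)(a\phi) = a\, C_G(\alpha)(\phi)$.

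For identities, I use $C_G(1_X)(\phi)(g) = 1_X(\phi(g)) = \phi(g)$, which shows $C_G(1_X) = 1_{C_G(X)}$. For composition, given $\alpha \colon X \to Y$ and $\beta \colon Y \to Z$, I compute
\[
C_G(\beta\alpha)(\phi)(g) = \beta(\alpha(\phi(g))) = C_G(\beta)\bigl(C_G(\alpha)(\phi)\bigr)(g),
\]
so $C_G(\beta\alpha) = C_G(\beta) C_G(\alpha)$.

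I expect no genuine obstacle here. The only point needing care is the equivariance step: the action acts on the argument $g$ while $C_G(\alpha)$ acts on the values, and the displayed computation has to keep these two roles straight.
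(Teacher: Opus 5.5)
Your proposal is correct and follows essentially the same approach as the paper: you check that $C_G(\alpha)$ is $G$-equivariant by pointwise evaluation, and then that $C_G$ preserves identities and composition because it acts by postcomposition. Your computations agree line by line with the paper's proof.
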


\begin{proof}
We first show that $C_G(\alpha)$ is a morphism of $G$-sets for any map $\alpha \colon X \rightarrow Y$. Let $\phi \in X^G$ and $a, g \in G$. Using the definition of the $G$-action, we have
\[
(a C_G(\alpha)(\phi))(g) = C_G(\alpha)(\phi)(a^{-1}g) = \alpha(\phi(a^{-1}g)),
\]
while
\[
C_G(\alpha)(a \phi)(g) = \alpha((a \phi)(g)) = \alpha(\phi(a^{-1}g)).
\]
Hence $(a C_G(\alpha)(\phi)) = C_G(\alpha)(a \phi)$, so $C_G(\alpha)$ is a $G$-equivariant map.

Next, we verify that $C_G$ preserves identities. For a set $X$ and $\phi \in X^G$,
\[
C_G(1_X)(\phi)(g) = 1_X(\phi(g)) = \phi(g),
\]
so $C_G(1_X)(\phi) = \phi$. Thus $C_G(1_X) = 1_{C_G(X)}$.

Finally, we check composition. Let $\alpha \colon X \rightarrow Y$ and $\beta \colon Y \rightarrow Z$ be maps. For $\phi \in X^G$ and $g \in G$,
\[
C_G(\beta\alpha)(\phi)(g) = \beta(\alpha(\phi(g))) = \beta(C_G(\alpha)(\phi)(g)) = C_G(\beta)(C_G(\alpha)(\phi))(g).
\]
Therefore $C_G(\beta\alpha) = C_G(\beta) \circ C_G(\alpha)$, which proves that $C_G$ is a functor.
\end{proof}

Having established that $C_G$ is a functor, we now prove its universal property: it is the right adjoint to the forgetful functor.

\begin{proposition}\label{prop:cofree_adjoint}
Let $G$ be a group. The functor $C_G$ is the right adjoint of the forgetful functor 
$U_G \colon G\text{-}\mathbf{Set} \longrightarrow \mathbf{Set}$, which sends each $G$-set to its underlying set.
\end{proposition}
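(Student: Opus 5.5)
I will prove the adjunction $U_G \dashv C_G$ by exhibiting the counit and unit explicitly and checking the triangle identities. For each set $X$, the counit is evaluation at the identity,
\[
\varepsilon_X \colon U_G C_G(X) \longrightarrow X, \qquad \varepsilon_X(\phi) = \phi(e).
\]
For each $G$-set $M$, the unit is the orbit map,
\[
\eta_M \colon M \longrightarrow C_G U_G(M), \qquad \eta_M(m)(g) = g^{-1}m.
\]
An equivalent route is to produce a bijection $\operatorname{Hom}_{G}(M, C_G(X)) \cong \operatorname{Hom}_{\mathbf{Set}}(U_G M, X)$, natural in $M$ and $X$. It sends $f$ to $\varepsilon_X \circ f$, i.e. $m \mapsto f(m)(e)$, and sends $h$ to $\hat h(m)(g) = h(g^{-1}m)$. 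I will set up this bijection as well, since it makes the universal property most transparent.

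The steps, in order, are as follows.

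\textbf{Equivariance of $\eta_M$.} For $a \in G$ we have
\[
\eta_M(am)(g) = g^{-1}am = (a^{-1}g)^{-1}m = \eta_M(m)(a^{-1}g) = (a\,\eta_M(m))(g),
\]
using the action from Proposition~\ref{prop:cofree_Gset}.

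\textbf{Naturality.} Naturality of $\eta$ in equivariant maps $u \colon M \to N$ holds because $u(g^{-1}m) = g^{-1}u(m)$. Naturality of $\varepsilon$ in maps $\alpha \colon X \to Y$ holds because $\alpha(\phi(e)) = C_G(\alpha)(\phi)(e)$, by the definition of $C_G$ on morphisms (Proposition~\ref{prop:cofree_functor}).

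\textbf{Triangle identities.} The first is $\varepsilon_{U_G M} \circ U_G(\eta_M) = 1$, which holds since $\eta_M(m)(e) = m$. The second is $C_G(\varepsilon_X) \circ \eta_{C_G X} = 1$. To check it, compute
\[
C_G(\varepsilon_X)\bigl(\eta_{C_G X}(\phi)\bigr)(g) = \bigl(g^{-1}\phi\bigr)(e) = \phi(g\,e) = \phi(g).
\]

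\textbf{Expected obstacle.} The only real obstacle is bookkeeping with inverses. The action $(a\phi)(g) = \phi(a^{-1}g)$ forces the unit to be $g \mapsto g^{-1}m$ rather than $g \mapsto gm$. Choosing the latter would break equivariance, so I will fix that formula first and verify the second triangle identity carefully. In the hom-set formulation, the corresponding key point is uniqueness: an equivariant $f$ is determined by its values at $e$, since
\[
f(m)(g) = \bigl(g^{-1}f(m)\bigr)(e) = f(g^{-1}m)(e).
\]
This shows the two assignments $f \mapsto \varepsilon_X \circ f$ and $h \mapsto \hat h$ are mutually inverse.
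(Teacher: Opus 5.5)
Your proposal is correct and matches the paper's proof: the same counit $\phi\mapsto\phi(e)$ and unit $\eta_M(m)(g)=g^{-1}m$, with equivariance, naturality and both triangle identities checked by the same computations. The hom-set bijection $h\mapsto\hat h$ you add is the reformulation $\theta_{XY}$ that the paper records right after its proof.
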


\begin{proof}
We construct the counit $\epsilon^C \colon U_G C_G \longrightarrow 1_{\mathbf{Set}}$ as follows. 
For a set $X$, define the component 
\[
\epsilon^C_X \colon U_G(C_G(X)) \longrightarrow X
\]
by 
\[
\epsilon^C_X(\phi) = \phi(e),
\]
for $\phi \in X^G$. 
To verify naturality, let $\beta \colon X \rightarrow Y$ be a map of sets. For any $\phi \in X^G$, we have
\[
\epsilon^C_Y\bigl(U_G(C_G(\beta))(\phi)\bigr) 
= U_G(C_G(\beta))(\phi)(e) 
= \beta(\phi(e)) 
= \beta(\epsilon^C_X(\phi)).
\]
Hence the naturality square commutes, so $\epsilon^C$ is a natural transformation.

We now construct the unit $\eta^C \colon 1_{G\text{-}\mathbf{Set}} \longrightarrow C_G U_G$. 
For a $G$-set $X$, define the component 
\[
\eta^C_X \colon X \longrightarrow C_G(U_G(X)) = X^G
\]
by 
\[
\eta^C_X(x)(g) = g^{-1}x,
\]
for $x \in X$ and $g \in G$. 
We first check that $\eta^C_X$ is $G$-equivariant. For $a, g \in G$ and $x \in X$,
\[
(a \eta^C_X(x))(g) = \eta^C_X(x)(a^{-1}g) = (a^{-1}g)^{-1}x = g^{-1}ax,
\]
while
\[
\eta^C_X(ax)(g) = g^{-1}(ax) = g^{-1}ax.
\]
Thus $\eta^C_X(ax) = a \eta^C_X(x)$, so $\eta^C_X$ is a morphism of $G$-sets.

Next, we verify that $\eta^C$ is natural. Let $\alpha \colon X \rightarrow Y$ be a $G$-equivariant map. 
For $x \in X$ and $g \in G$,
\[
C_G(U_G(\alpha))(\eta^C_X(x))(g) 
= \alpha\bigl(\eta^C_X(x)(g)\bigr) 
= \alpha(g^{-1}x) 
= g^{-1}\alpha(x) 
= \eta^C_Y(\alpha(x))(g).
\]
Hence $\eta^C$ is a natural transformation.

It remains to prove the triangle identities.

First, for any $G$-set $X$ and $x \in X$,
\[
\epsilon^C_{U_G(X)}\bigl(U_G(\eta^C_X)(x)\bigr) 
= \eta^C_X(x)(e) 
= e^{-1}x 
= x.
\]
Therefore 
\[
\epsilon^C U_G \circ U_G \eta^C = 1_{U_G}.
\]

Second, for any set $X$ and any $\phi \in C_G(X) = X^G$, we compute the following chain of equalities:
\[
\begin{aligned}
C_G(\epsilon^C_X)\bigl(\eta^C_{C_G(X)}(\phi)\bigr)(g)
  &= \epsilon^C_X\bigl(\eta^C_{C_G(X)}(\phi)(g)\bigr) \\
  &= \epsilon^C_X(g^{-1}\phi) \\
  &= (g^{-1}\phi)(e) \\
  &= \phi((g^{-1})^{-1}e) \\
  &= \phi(g).
\end{aligned}
\]
Thus
\[
C_G(\epsilon^C_X)\bigl(\eta^C_{C_G(X)}(\phi)\bigr) = \phi,
\]
so
\[
C_G \epsilon^C \circ \eta^C C_G = 1_{C_G}.
\]

Since both triangle identities hold, we conclude that $C_G$ is indeed the right adjoint of the forgetful functor $U_G$.
\end{proof}

We can reformulate this adjunction in terms of Hom-functors. 
For a $G$-set $X$ and a set $Y$, we have the natural isomorphism
\[
\theta_{XY} \colon \operatorname{Hom}_{\mathbf{Set}}(U_G(X), Y) 
\longrightarrow \operatorname{Hom}_{G\text{-}\mathbf{Set}}(X, C_G(Y)),
\]
which is defined for any $\alpha \in \operatorname{Hom}_{\mathbf{Set}}(U_G(X), Y)$, 
$x \in X$, and $g \in G$ by
\[
\theta_{XY}(\alpha)(x)(g) = \alpha(g^{-1}x).
\]

\begin{proposition}\label{prop:cofree_faithfulness}
Let $G$ be a group and consider the adjunction $U_G \dashv C_G$ with unit $\eta^C \colon 1_{G\text{-}\mathbf{Set}} \longrightarrow C_G U_G$ and counit $\epsilon^C \colon U_G C_G \longrightarrow 1_{\mathbf{Set}}$. The following statements hold:
\begin{enumerate}
\item The forgetful functor $U_G \colon G\text{-}\mathbf{Set} \longrightarrow \mathbf{Set}$ is faithful.
\item The cofree functor $C_G \colon \mathbf{Set} \longrightarrow G\text{-}\mathbf{Set}$ is faithful.
\item The functor $U_G$ is full if and only if $G$ is the trivial group.
\item The functor $C_G$ is full if and only if $G$ is the trivial group.
\end{enumerate}
\end{proposition}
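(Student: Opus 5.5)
I will prove the four items separately, using concrete elements and, where convenient, the counit $\epsilon^C$ and the unit $\eta^C$ of Proposition~\ref{prop:cofree_adjoint}.

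For (1), I argue directly. A $G$-equivariant map is determined by its underlying function, so $U_G(\alpha)=U_G(\beta)$ forces $\alpha=\beta$.

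For (2), let $\alpha,\beta\colon X\to Y$ with $C_G(\alpha)=C_G(\beta)$. For $x\in X$, let $c_x\in X^G$ be the constant function at $x$. Evaluating at $e$ gives
\[
\alpha(x)=C_G(\alpha)(c_x)(e)=C_G(\beta)(c_x)(e)=\beta(x).
\]
Equivalently, $\epsilon^C$ is componentwise surjective, which is the standard criterion for faithfulness of the right adjoint.

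For (3), the trivial-group direction is immediate, since then $U_G$ is an isomorphism of categories. Conversely, suppose $G$ has some $a\neq e$. Take the regular $G$-set $G$ and the one-point $G$-set $\ast$, and a two-element set $Y=\{0,1\}$ with trivial action. The function $G\to Y$ sending $e\mapsto 0$ and everything else to $1$ is not equivariant: equivariance into a trivial $G$-set forces the map to be constant on orbits, and $G$ is a single orbit. Hence this set map is not in the image of $U_G$, so $U_G$ is not full.

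For (4), the trivial case holds because $C_{\{e\}}(X)=X^{\{e\}}\cong X$ with trivial action, so $C_G$ is essentially the identity. Conversely, for $G\neq\{e\}$ I would exhibit a $G$-map between cofree $G$-sets that is not of the form $C_G(\alpha)$.
\begin{itemize}
\item The first candidate is the action of a fixed group element. This fails: $\phi\mapsto a\phi$ is equivariant only if $a$ is central.
\item Instead I use right translation. For $b\in G$, define $\rho_b\colon X^G\to X^G$ by $\rho_b(\phi)(g)=\phi(gb)$. It is equivariant because $(a\rho_b\phi)(g)=\phi(a^{-1}gb)=\rho_b(a\phi)(g)$.
\end{itemize}
Take $X=\{0,1\}$, $b\neq e$, and $\phi=\delta_e$, the indicator of $e$. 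Then $\rho_b(\delta_e)=\delta_{b^{-1}}\neq\delta_e$. On the other hand, any $C_G(\alpha)$ with $\alpha\colon X\to X$ acts pointwise, so it sends $\delta_e$ to a function that is constant on $G\setminus\{e\}$. The function $\delta_{b^{-1}}$ is not of that form when $|G|\ge 3$. For $|G|=2$ it has the form $\alpha\circ\delta_e$ with $\alpha$ the swap, so I will test a second element as well: the constant function $0$ is fixed by $\rho_b$, which forces $\alpha(0)=0$, contradicting $\alpha$ being the swap.

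A cleaner uniform argument: if $\rho_b=C_G(\alpha)$, then evaluating at constant functions gives $\alpha=1_X$. Then $\rho_b=1$, but $\rho_b(\delta_e)\neq\delta_e$, a contradiction. I will use this version; the main care needed is verifying that $\rho_b$ is a well-defined equivariant map, which I will check explicitly.
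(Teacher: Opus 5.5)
Your proof is correct, but it follows a different route from the paper. The paper obtains all four items from the standard adjunction criteria: a left adjoint is faithful, resp.\ full, iff the unit components are monic, resp.\ split epi; a right adjoint is faithful, resp.\ full, iff the counit components are epi, resp.\ split mono. It then only inspects $\eta^C$ and $\epsilon^C$. On a trivial $G$-set $X$ with $|X|\ge 2$, the image of $\eta^C_X$ is just the constant functions. And $\epsilon^C_X$, evaluation at $e$, is surjective but not injective once $G\neq\{e\}$ and $|X|\ge 2$.

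You instead argue directly on hom-sets with explicit witnesses. For (3) you use a non-constant function from the regular $G$-set to a two-point trivial $G$-set. For (4) you use the right translation $\rho_b$. The two routes capture the same obstruction from different sides. $G$-maps $C_G(X)\to C_G(X)$ correspond to set maps $X^G\to X$ via $h\mapsto \epsilon^C_X\circ h$. Maps of the form $C_G(\alpha)$ correspond to $\alpha\circ\mathrm{ev}_e$. Your $\rho_b$ corresponds to $\mathrm{ev}_b$, which cannot factor through the non-injective $\mathrm{ev}_e$.

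The paper's route is shorter and uniform, but it rests on the cited criteria. Yours is elementary and self-contained. The right translations you find are also the structure that reappears in the comultiplication $\delta^G_X(\phi)(g)(h)=\phi(gh)$ in the section on comonads.

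You can drop the unused one-point $G$-set in (3) and the $|G|=2$ case analysis in (4). Your uniform argument already covers every nontrivial $G$: constants force $\alpha=1_X$, while $\rho_b(\delta_e)=\delta_{b^{-1}}\neq\delta_e$.
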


\begin{proof}
We analyse the components of the unit and the counit.

For a $G$-set $X$, the unit component is given by
\[
\eta^C_X \colon X \longrightarrow X^G, \qquad \eta^C_X(x)(g) = g^{-1}x.
\]
If $\eta^C_X(x_1) = \eta^C_X(x_2)$, then evaluating at $g = e$ yields $x_1 = x_2$. Therefore $\eta^C_X$ is a monomorphism in $\mathbf{Set}$ for every $X$. Since the left adjoint is faithful if and only if all components of the unit are monomorphisms, we conclude that $U_G$ is faithful.

For a set $X$, the counit component is given by
\[
\epsilon^C_X \colon X^G \longrightarrow X, \qquad \epsilon^C_X(\phi) = \phi(e).
\]
For every $x \in X$, the constant function $\phi_x \in X^G$ defined by $\phi_x(g) = x$ satisfies $\epsilon^C_X(\phi_x) = x$. Thus $\epsilon^C_X$ is an epimorphism in $\mathbf{Set}$ for every $X$. Since the right adjoint is faithful if and only if all components of the counit are epimorphisms, it follows that $C_G$ is faithful.

It remains to examine the fullness conditions. The functor $U_G$ is full if and only if every component of the unit $\eta^C_X$ is a split epimorphism. If $G$ is non-trivial, choose a set $X$ with at least two elements, regarded as a trivial $G$-set. In that case, the image of $\eta^C_X$ consists only of the constant functions from $G$ to $X$, since $\eta^C_X(x)(g) = x$ for all $g \in G$. Since there exist non-constant functions in $X^G$ when $|X| \geq 2$, the map $\eta^C_X$ is not surjective, hence not an epimorphism, and therefore cannot be a split epimorphism. Consequently, $U_G$ is not full unless $G$ is trivial. Conversely, if $G$ is the trivial group, then $X^G \cong X$ and $\eta^C_X$ is the identity, so $U_G$ is actually an isomorphism of categories and, in particular, full.

Similarly, the functor $C_G$ is full if and only if every component of the counit $\epsilon^C_X$ is a split monomorphism. If $G$ is non-trivial and $X$ has at least two elements, the map $\epsilon^C_X$ is not injective, since two different functions $G \rightarrow X$ can agree at $e$. Hence $\epsilon^C_X$ is not a monomorphism, and thus not a split monomorphism. It follows that $C_G$ is not full for non-trivial $G$. If $G$ is trivial, then $\epsilon^C_X$ is again the identity, and $C_G$ is full. Therefore both adjoint functors are full exactly in the trivial case.
\end{proof}

\section{Burnside's Lemma}
\label{sec:burnside}

The following result characterizes the fixed points of the cofree $G$-set $C_G(X)$, which will be essential for counting orbits via Burnside's Lemma.

\begin{proposition}\label{prop:fixed_points_global}
Let $G$ be a group and $X$ a set. Then $\phi \in C_G(X)^G$ if and only if there exists $x \in X$ such that $\phi(g) = x$ for all $g \in G$.
\end{proposition}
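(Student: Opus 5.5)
The statement is a direct unwinding of the definition of the action on $C_G(X)$, so I will prove the two implications separately. Here $C_G(X)^G$ denotes the set of $\phi \in X^G$ with $a\phi = \phi$ for all $a \in G$. By the definition of the action in Section~\ref{sec:cofree}, this is the condition $\phi(a^{-1}g) = \phi(g)$ for all $a, g \in G$.

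For the forward direction, suppose $\phi$ is fixed and set $x = \phi(e)$. Given an arbitrary $g \in G$, I take $a = g$ and evaluate the fixed-point equation $(a\phi)(e') = \phi(e')$ at a suitable point. Concretely, $(g^{-1}\phi)(e) = \phi(ge)$, so
\[
\phi(g) = \phi(ge) = (g^{-1}\phi)(e) = \phi(e) = x.
\]
Hence $\phi$ is the constant function with value $x$. The same conclusion follows from the fact that the action of $G$ on itself by left translation is transitive: every $g$ has the form $a^{-1}e$ for $a = g^{-1}$.

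For the converse, suppose $\phi(g) = x$ for all $g$. Then for any $a, g \in G$,
\[
(a\phi)(g) = \phi(a^{-1}g) = x = \phi(g),
\]
so $a\phi = \phi$ and $\phi \in C_G(X)^G$.

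There is no real obstacle here. The only point needing care is the choice of group element in the forward direction: with the action $(a\phi)(g) = \phi(a^{-1}g)$, one must take $a = g^{-1}$ so that $a^{-1}e = g$, and keep the inverses straight. I would also remark, for the later Burnside computation, that this gives a bijection $X \to C_G(X)^G$ sending $x$ to the constant function $\phi_x$ already used in Proposition~\ref{prop:cofree_faithfulness}. So $|C_G(X)^G| = |X|$.
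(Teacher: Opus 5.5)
Your proof is correct and essentially matches the paper's. Both set $x=\phi(e)$, evaluate the fixed-point equation $g^{-1}\phi=\phi$ at $e$ to get $\phi(g)=\phi(e)$, and verify the converse directly from $(a\phi)(g)=\phi(a^{-1}g)$. One small wording slip: you write ``take $a = g$'' but then correctly use $a = g^{-1}$, as you yourself note at the end.
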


\begin{proof}
$(\Rightarrow)$ Suppose $\phi \in C_G(X)^G$. Set $x = \phi(e)$. For any $g \in G$, since $\phi$ is fixed by the $G$-action, we have $g^{-1}\phi = \phi$. Evaluating this equality at $e \in G$ gives
\[
(g^{-1}\phi)(e) = \phi(e).
\]
But by the definition of the action on $C_G(X)$,
\[
(g^{-1}\phi)(e) = \phi((g^{-1})^{-1}e) = \phi(g).
\]
Hence $\phi(g) = \phi(e) = x$ for all $g \in G$. Therefore $\phi$ is constant.

$(\Leftarrow)$ Conversely, suppose there is $x \in X$ such that $\phi(g) = x$ for all $g \in G$. For any $a \in G$ and any $g \in G$, we have
\[
(a \phi)(g) = \phi(a^{-1}g) = x = \phi(g).
\]
Thus $a \phi = \phi$ for every $a \in G$, so $\phi \in C_G(X)^G$.
\end{proof}

We now specialize the previous characterization to the fixed points of a single group element $g \in G$. This will be particularly useful when we apply Burnside's Lemma, as it requires counting the elements fixed by each individual group element.

\begin{proposition}\label{prop:fixed_points_element}
Let $G$ be a group, $g \in G$, and $X$ a set. Then $\phi \in C_G(X)^g$ if and only if, for each coset $Ha$ with $a \in G$ and $H = \langle g \rangle$, the restriction $\phi|_{Ha}$ is constant.
\end{proposition}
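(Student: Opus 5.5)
Let $H=\langle g\rangle$ and fix $\phi\in X^G$. The plan is to unwind the fixed-point condition $g\phi=\phi$ using the action $(a\phi)(h)=\phi(a^{-1}h)$. Evaluating at an arbitrary $h\in G$ shows that $\phi\in C_G(X)^g$ holds exactly when
\[
\phi(g^{-1}h)=\phi(h)\quad\text{for all } h\in G.
\]
The proof then consists of translating this pointwise identity into a statement about the right cosets $Ha$. These are the orbits of $H$ acting on $G$ by left multiplication, so $Ha=\{g^k a : k\in\mathbb{Z}\}$.

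For $(\Rightarrow)$, assume $g\phi=\phi$. Then $g^{-1}\phi=\phi$ as well, and by induction $g^k\phi=\phi$ for every $k\in\mathbb{Z}$. Evaluating at $a$ gives $\phi(g^{-k}a)=\phi(a)$ for all $k$. Since every element of $Ha$ has the form $g^{-k}a$ for some $k\in\mathbb{Z}$, the restriction $\phi|_{Ha}$ is constant, equal to $\phi(a)$.

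For $(\Leftarrow)$, assume $\phi$ is constant on every coset $Ha$. Given $h\in G$, the elements $h$ and $g^{-1}h$ lie in the same coset $Hh$, because $g^{-1}\in H$. Hence $\phi(g^{-1}h)=\phi(h)$, which is precisely $(g\phi)(h)=\phi(h)$. So $g\phi=\phi$.

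There is no real obstacle. The only point needing care is to keep track of the side of the cosets: the action twists by left multiplication by $g^{-1}$, so right cosets $Ha$ are the correct ones. It also matters that $\langle g\rangle$ includes negative powers, which is why the induction must cover $k\in\mathbb{Z}$ (this covers infinite-order $g$).
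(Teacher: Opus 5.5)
Your proof is correct and follows essentially the same route as the paper: you unwind $g\phi=\phi$ to $\phi(g^{-1}h)=\phi(h)$, propagate along powers of $g$ for $(\Rightarrow)$, and use that $h$ and $g^{-1}h$ lie in the same coset for $(\Leftarrow)$. The only cosmetic difference is that you run the induction on the action ($g^k\phi=\phi$ for all $k\in\mathbb{Z}$) rather than pointwise on values, which handles positive and negative powers at once.
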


\begin{proof}
$(\Rightarrow)$ Suppose $\phi \in C_G(X)^g$, i.e. $g \phi = \phi$. 
For any $a \in G$, let $x = \phi(a)$. Since $g \phi = \phi$, we also have $g^{-1} \phi = \phi$. Evaluating at $a$ gives
\[
(g^{-1}\phi)(a) = \phi(a) = x.
\]
But by definition of the action on $C_G(X)$,
\[
(g^{-1}\phi)(a) = \phi((g^{-1})^{-1}a) = \phi(g a).
\]
Hence $\phi(g a) = \phi(a) = x$. By induction, $\phi(g^n a) = x$ for all $n \ge 0$. 
For negative powers, since $g \phi = \phi$, we have $(g \phi)(a) = \phi(a)$, which gives
\[
(g \phi)(a) = \phi(g^{-1}a) = \phi(a) = x.
\]
Thus, again by induction, $\phi(g^{-n} a) = x$ for all $n \ge 0$. Therefore $\phi$ takes the constant value $x$ on every element of the coset $Ha = \{g^n a \mid n \in \mathbb{Z}\}$. Hence $\phi|_{Ha}$ is constant.

$(\Leftarrow)$ Conversely, suppose that for each coset $Ha$ with $H = \langle g \rangle$, the restriction of $\phi$ to $Ha$ is constant. 
Let $h \in G$. Then $h$ belongs to exactly one coset, say $h \in Ha$, so there exists $n \in \mathbb{Z}$ with $h = g^n a$. Since both $g^{-1}h = g^{n-1}a$ and $h = g^n a$ lie in the same coset $Ha$, and $\phi$ is constant on this coset, we have
\[
\phi(g^{-1} h) = \phi(h).
\]
Therefore,
\[
(g \phi)(h) = \phi(g^{-1} h) = \phi(h).
\]
Since $h \in G$ was arbitrary, it follows that $g \phi = \phi$, and thus $\phi \in C_G(X)^g$.
\end{proof}

The characterization of fixed points obtained above is particularly useful in conjunction with the following classical counting result, which relates the number of orbits of a finite group action to the average number of elements fixed by each group element.

\begin{proposition}[Burnside's Lemma]\label{prop:burnside}
Let $G$ be a finite group and $X$ a finite $G$-set. Then
\[
\lvert X/G \rvert = \frac{1}{\lvert G \rvert} \sum_{g \in G} \lvert X^g \rvert,
\]
where $X^g = \{ x \in X \mid g x = x \}$ denotes the set of fixed points of $g$.
\end{proposition}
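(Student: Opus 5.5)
The plan is the standard double-counting argument applied to the finite set
\[
S = \{(g,x) \in G \times X \mid gx = x\}.
\]
We count $\lvert S\rvert$ in two ways: by first coordinate and by second coordinate.

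Summing over the first coordinate gives $\lvert S\rvert = \sum_{g\in G}\lvert X^g\rvert$ directly. Summing over the second coordinate gives $\lvert S\rvert = \sum_{x\in X}\lvert G_x\rvert$, where $G_x = \{g \in G \mid gx = x\}$ is the stabilizer of $x$. It remains to show that
\[
\sum_{x\in X}\lvert G_x\rvert = \lvert G\rvert \cdot \lvert X/G\rvert .
\]

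The key step is the orbit--stabilizer relation $\lvert Gx\rvert \cdot \lvert G_x\rvert = \lvert G\rvert$. I would prove it by showing that $gG_x \mapsto gx$ is a well-defined bijection from the set of left cosets $G/G_x$ onto the orbit $Gx$. Indeed, $gx = g'x$ holds if and only if $g^{-1}g' \in G_x$, which is the same as $gG_x = g'G_x$; this single equivalence gives both well-definedness and injectivity, and surjectivity is clear. Lagrange's theorem then gives $\lvert G/G_x\rvert = \lvert G\rvert/\lvert G_x\rvert$. This is the only step with real content, and it is where the finiteness of $G$ is used.

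Finally, $X$ is the disjoint union of its orbits. On each orbit $O$ we have $\lvert G_x\rvert = \lvert G\rvert/\lvert O\rvert$ for every $x \in O$, so
\[
\sum_{x\in O}\lvert G_x\rvert = \lvert O\rvert \cdot \frac{\lvert G\rvert}{\lvert O\rvert} = \lvert G\rvert .
\]
Summing over the $\lvert X/G\rvert$ orbits gives $\sum_{x\in X}\lvert G_x\rvert = \lvert G\rvert\,\lvert X/G\rvert$. Equating the two counts of $\lvert S\rvert$ and dividing by $\lvert G\rvert$ yields the stated formula.
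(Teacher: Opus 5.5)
Your proof is correct and is the standard one. The paper gives no argument of its own here, only a citation to Rotman, where the lemma is proved by exactly this double count of the pairs $(g,x)$ with $gx = x$, combined with the orbit--stabilizer theorem.
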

\begin{proof}
This is the classical Burnside's Lemma; see for instance \cite{rotman2012introduction}.
\end{proof}

We now apply Burnside's Lemma to the cofree $G$-set $C_G(X)$. This yields a classical number-theoretic result (Fermat's little theorem) as a direct corollary.

\begin{proposition}\label{prop:fermat_prime}
Let $n$ be a natural number and $p$ a prime number. Then $p \mid n^p + (p-1)n$.
\end{proposition}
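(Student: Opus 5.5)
Take $G=\mathbb{Z}/p\mathbb{Z}$ and let $X$ be a set with $\lvert X\rvert = n$. We apply Burnside's Lemma (Proposition~\ref{prop:burnside}) to the finite $G$-set $C_G(X)=X^G$, which has $n^p$ elements. The integrality of the number of orbits will give the divisibility. (If $n=0$ the statement is trivial, so assume $n\ge 1$; in fact the argument works for $n=0$ as well.)

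First count fixed points. For $g=e$ every function is fixed, so $\lvert C_G(X)^e\rvert = n^p$. For $g\neq e$, since $p$ is prime, $\langle g\rangle = G$. Hence there is a single coset of $\langle g\rangle$ in $G$. By Proposition~\ref{prop:fixed_points_element}, $\phi$ is fixed by $g$ exactly when $\phi$ is constant on all of $G$. (Equivalently, Proposition~\ref{prop:fixed_points_global} gives the same conclusion.) So $\lvert C_G(X)^g\rvert = n$ for each of the $p-1$ nonidentity elements.

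Burnside's Lemma then gives
\[
\lvert C_G(X)/G\rvert = \frac{1}{p}\bigl(n^p + (p-1)n\bigr).
\]
The left side is a nonnegative integer, so $p \mid n^p+(p-1)n$.

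The only point needing care is that every nonidentity element generates $G$. This uses only Lagrange's theorem: the order of $g\neq e$ divides the prime $p$ and exceeds $1$, so it equals $p$. There is no real obstacle beyond this. One may also note that the result is equivalent to $p\mid n^p-n$, which is Fermat's little theorem.
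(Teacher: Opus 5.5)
Your proof is correct and is essentially the paper's own argument: Burnside's Lemma applied to $C_{\mathbb{Z}_p}(X)$, with $n^p$ fixed points for the identity and $n$ fixed points (the constant maps) for each of the $p-1$ non-identity elements, each of which generates the group. The differences are cosmetic. You count fixed points through the single-coset case of Proposition~\ref{prop:fixed_points_element}, whereas the paper goes straight to Proposition~\ref{prop:fixed_points_global}. You also spell out the Lagrange step and the $n=0$ case, which the paper leaves implicit.
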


\begin{proof}
Let $X$ be a set with $n$ elements and consider the cofree $G$-set $C_{\mathbb{Z}_p}(X)$. 
Since $p$ is prime, every non-zero element $x \in \mathbb{Z}_p$ generates the whole group. 
Thus, for any such $x$, the set of fixed points of $x$ equals the set of fixed points of the entire group:
\[
C_{\mathbb{Z}_p}(X)^x = C_{\mathbb{Z}_p}(X)^{\mathbb{Z}_p}.
\]
By our previous characterization of the fixed points of the cofree $G$-set, the elements of this set are precisely the constant maps from $\mathbb{Z}_p$ to $X$. Hence 
\[
\left\lvert C_{\mathbb{Z}_p}(X)^x \right\rvert = n
\]
for every $x \neq 0$.

On the other hand, for the identity element $0 \in \mathbb{Z}_p$, we have
\[
C_{\mathbb{Z}_p}(X)^0 = C_{\mathbb{Z}_p}(X) = X^{\mathbb{Z}_p},
\]
so its cardinality is
\[
\left\lvert C_{\mathbb{Z}_p}(X)^0 \right\rvert = n^p.
\]

Applying Burnside's Lemma to the action of $\mathbb{Z}_p$ on $C_{\mathbb{Z}_p}(X)$, we obtain
\[
\left\lvert C_{\mathbb{Z}_p}(X)/\mathbb{Z}_p \right\rvert
= \frac{1}{p} \left( n^p + (p-1)n \right).
\]
Since the left-hand side is a non-negative integer, the right-hand side must also be an integer. Therefore
\[
p \mid n^p + (p-1)n,
\]
as required.
\end{proof}

The previous result for a prime $p$ generalizes naturally to prime powers $p^m$. The following proposition provides the corresponding congruence modulo $p^m$, obtained by applying Burnside's Lemma to the cyclic group $\mathbb{Z}_{p^m}$.

\begin{proposition}\label{prop:fermat_prime_power}
Let $n$ and $m$ be natural numbers, and let $p$ be a prime number. Then
\[
p^m \mid n^{p^m} + \sum_{s=1}^m (p^s - p^{s-1}) n^{p^{m-s}}.
\]
\end{proposition}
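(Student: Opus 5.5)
We apply Burnside's Lemma (Proposition~\ref{prop:burnside}) to the cofree $G$-set $C_{\mathbb{Z}_{p^m}}(X)$, where $X$ is a set with $n$ elements, in the same way as in the proof of Proposition~\ref{prop:fermat_prime}. The number of orbits is a non-negative integer, so $p^m$ divides $\sum_{g} |C_{\mathbb{Z}_{p^m}}(X)^g|$. It therefore suffices to show that this sum equals the expression in the statement.

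To compute $|C_{\mathbb{Z}_{p^m}}(X)^g|$ we use Proposition~\ref{prop:fixed_points_element}. A function $\phi$ is fixed by $g$ exactly when it is constant on each coset of $H=\langle g\rangle$. Such functions correspond bijectively to functions from the set of cosets of $H$ to $X$, so
\[
|C_{\mathbb{Z}_{p^m}}(X)^g| = n^{[\mathbb{Z}_{p^m}:\langle g\rangle]} = n^{p^m/\operatorname{ord}(g)}.
\]

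Next we sort the elements of $\mathbb{Z}_{p^m}$ by their order $p^s$, for $s=0,\dots,m$. In a cyclic group of order $p^m$ there is a unique subgroup of order $p^s$, namely the one generated by $p^{m-s}$. Its generators are exactly the elements of order $p^s$. There are $\varphi(p^s)=p^s-p^{s-1}$ of them when $s\ge 1$, and one (the identity) when $s=0$. An element of order $p^s$ generates a subgroup of index $p^{m-s}$, so it fixes $n^{p^{m-s}}$ points. The identity contributes $n^{p^m}$. Summing over $s$ gives
\[
\sum_{g\in\mathbb{Z}_{p^m}} |C_{\mathbb{Z}_{p^m}}(X)^g| = n^{p^m}+\sum_{s=1}^m (p^s-p^{s-1})\,n^{p^{m-s}},
\]
and dividing by $|G|=p^m$ yields the claimed divisibility.

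The main point to check is the count of elements of each order in $\mathbb{Z}_{p^m}$, that is, that there are exactly $p^s-p^{s-1}$ elements of order $p^s$. This is the standard count of elements of each order in a cyclic group. Concretely, the elements of order dividing $p^s$ form the subgroup $p^{m-s}\mathbb{Z}_{p^m}$, which has $p^s$ elements. Removing those of order dividing $p^{s-1}$ leaves $p^s-p^{s-1}$. The remaining steps follow directly from the earlier propositions.
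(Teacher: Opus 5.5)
Your proposal is correct and follows the paper's proof essentially step for step. Both apply Burnside's Lemma to the cofree $\mathbb{Z}_{p^m}$-set on an $n$-element set, count the fixed points of $g$ as $n^{[\mathbb{Z}_{p^m}:\langle g\rangle]}$ via Proposition~\ref{prop:fixed_points_element}, and group the elements by order $p^s$, with $p^s-p^{s-1}$ elements of each order (a count you additionally justify through the subgroup $p^{m-s}\mathbb{Z}_{p^m}$, which the paper only asserts).
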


\begin{proof}
Let $X$ be a set with $n$ elements, and consider the cofree $G$-set $C_{\mathbb{Z}_{p^m}}(X)$.

Recall that in the cyclic group $\mathbb{Z}_{p^m}$, for each $s = 0, 1, \dots, m$, there is a unique subgroup of order $p^s$. Consequently, the number of elements of order exactly $p^s$ is
\[
p^s - p^{s-1}
\]
for $s = 1, \dots, m$ (the identity element has order $p^0 = 1$).

Now take an element $x \in \mathbb{Z}_{p^m}$ of order $p^s$, where $1 \le s \le m$. The subgroup generated by $x$ is $\langle x \rangle$, whose order is $p^s$. Hence its index in $\mathbb{Z}_{p^m}$ is
\[
[\mathbb{Z}_{p^m} : \langle x \rangle] = \frac{p^m}{p^s} = p^{m-s}.
\]
By the characterization of fixed points in the cofree $G$-set, $\phi \in C_{\mathbb{Z}_{p^m}}(X)^x$ if and only if $\phi$ is constant on each coset of $\langle x \rangle$. Since there are exactly $p^{m-s}$ such cosets, the number of such maps is
\[
\left\lvert C_{\mathbb{Z}_{p^m}}(X)^x \right\rvert = n^{p^{m-s}}.
\]

For the identity element $0 \in \mathbb{Z}_{p^m}$, we have $C_{\mathbb{Z}_{p^m}}(X)^0 = C_{\mathbb{Z}_{p^m}}(X) = X^{\mathbb{Z}_{p^m}}$, so
\[
\left\lvert C_{\mathbb{Z}_{p^m}}(X)^0 \right\rvert = n^{p^m}.
\]

Applying Burnside's Lemma to the action of $\mathbb{Z}_{p^m}$ on $C_{\mathbb{Z}_{p^m}}(X)$, we obtain
\[
\left\lvert C_{\mathbb{Z}_{p^m}}(X)/\mathbb{Z}_{p^m} \right\rvert
= \frac{1}{p^m} \left( n^{p^m} + \sum_{s=1}^m (p^s - p^{s-1}) n^{p^{m-s}} \right).
\]
Since the left-hand side is a non-negative integer, the right-hand side must also be an integer. Therefore
\[
p^m \mid n^{p^m} + \sum_{s=1}^m (p^s - p^{s-1}) n^{p^{m-s}},
\]
as required.
\end{proof}

We now extend the previous prime-power case to an arbitrary cyclic group of order $n$. The following classical congruence, which generalizes Fermat's little theorem, emerges naturally from the same cofree construction.

\begin{proposition}\label{prop:fermat_general}
Let $n$ and $k$ be natural numbers. Then
\[
n \mid \sum_{d \mid n} \varphi(d)\, k^{\,n/d},
\]
where $\varphi$ denotes Euler's totient function.
\end{proposition}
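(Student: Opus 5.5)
We follow the same pattern as Propositions~\ref{prop:fermat_prime} and~\ref{prop:fermat_prime_power}. Let $X$ be a set with $k$ elements and consider the cofree $G$-set $C_{\mathbb{Z}_n}(X) = X^{\mathbb{Z}_n}$, a finite $\mathbb{Z}_n$-set with $k^n$ elements. Burnside's Lemma (Proposition~\ref{prop:burnside}) gives
\[
\left\lvert C_{\mathbb{Z}_n}(X)/\mathbb{Z}_n \right\rvert = \frac{1}{n}\sum_{x \in \mathbb{Z}_n} \left\lvert C_{\mathbb{Z}_n}(X)^x \right\rvert .
\]
The left-hand side is a non-negative integer, so it suffices to show that the sum on the right equals $\sum_{d \mid n} \varphi(d)\, k^{n/d}$.

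First we count the fixed points of a single element. Take $x \in \mathbb{Z}_n$ of order $d$, so $d \mid n$. By Proposition~\ref{prop:fixed_points_element}, $\phi \in C_{\mathbb{Z}_n}(X)^x$ exactly when $\phi$ is constant on each coset of $\langle x \rangle$. The subgroup $\langle x \rangle$ has order $d$, so it has index $n/d$, and such maps correspond bijectively to functions from the $n/d$ cosets to $X$. Hence $\lvert C_{\mathbb{Z}_n}(X)^x \rvert = k^{n/d}$. This also covers the identity element, which is the case $d=1$ and gives $k^n$.

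Next we group the terms of the Burnside sum by the order of $x$. We use the classical fact about cyclic groups: for each $d \mid n$, $\mathbb{Z}_n$ has a unique subgroup of order $d$, namely $\langle n/d \rangle$. That subgroup is cyclic of order $d$, so it contains exactly $\varphi(d)$ generators. The elements of order exactly $d$ in $\mathbb{Z}_n$ are precisely these generators, so there are $\varphi(d)$ of them. Partitioning $\mathbb{Z}_n$ by element order therefore gives
\[
\sum_{x \in \mathbb{Z}_n} \left\lvert C_{\mathbb{Z}_n}(X)^x \right\rvert = \sum_{d \mid n} \varphi(d)\, k^{n/d},
\]
and divisibility by $n$ follows.

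The step needing the most care is the count of elements of order $d$. Here we rely on the uniqueness of the subgroup of each order in a cyclic group, citing the standard reference \cite{rotman2012introduction}. This is exactly the fact already used in the prime-power case, where $\varphi(p^s) = p^s - p^{s-1}$. Everything else is a direct repetition of the earlier arguments.
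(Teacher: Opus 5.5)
Your proof is correct and follows the same route as the paper. You apply Burnside's Lemma to the cofree $\mathbb{Z}_n$-set $X^{\mathbb{Z}_n}$ with $|X| = k$, use Proposition~\ref{prop:fixed_points_element} to get $k^{n/d}$ fixed points for an element of order $d$, and group the sum by the $\varphi(d)$ elements of each order $d \mid n$; your justification of that count via the unique subgroup of order $d$ is a bit more explicit than the paper's bare assertion.
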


\begin{proof}
Let $G = \mathbb{Z}_n$ be the cyclic group of order $n$, and let $X$ be a set with $|X| = k$. We consider the cofree $G$-set $C_G(X) = X^G$.

By the characterization of fixed points obtained in Proposition~\ref{prop:fixed_points_element}, for an element $g \in G$, the set of fixed points $C_G(X)^g$ consists precisely of those functions that are constant on each coset of the subgroup $\langle g \rangle$. Since there are $[G : \langle g \rangle]$ such cosets, and each coset can be assigned independently an element of $X$, we have
\[
\left\lvert C_G(X)^g \right\rvert = |X|^{[G : \langle g \rangle]} = k^{\, n / \operatorname{ord}(g)}.
\]

Now, in the cyclic group $\mathbb{Z}_n$, for each divisor $d$ of $n$, there exist exactly $\varphi(d)$ elements of order $d$. Grouping the elements of $G$ by their order and applying Burnside's Lemma (Proposition~\ref{prop:burnside}) to the $G$-set $C_G(X)$, we obtain
\[
\left\lvert C_G(X) / G \right\rvert
= \frac{1}{n} \sum_{g \in G} \left\lvert C_G(X)^g \right\rvert
= \frac{1}{n} \sum_{d \mid n} \varphi(d)\, k^{\, n/d}.
\]

Since the left-hand side represents the number of orbits of the $G$-action on $C_G(X)$, it is necessarily an integer. Hence,
\[
n \mid \sum_{d \mid n} \varphi(d)\, k^{\, n/d}.
\]
Thus the result follows.
\end{proof}

We now consider the elementary abelian $p$-group $(\mathbb{Z}_p)^r$ instead of the cyclic $p$-group. This choice yields a different classical congruence, which is a slight variant of the previous results.

\begin{proposition}\label{prop:fermat_elementary_abelian}
Let $p$ be a prime number, $r$ a natural number, and $k$ a natural number. Then
\[
p^r \mid k^{\,p^r} + (p^r - 1)\, k^{\,p^{r-1}}.
\]
\end{proposition}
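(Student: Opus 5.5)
The plan is to repeat the Burnside argument used for Proposition~\ref{prop:fermat_prime} and Proposition~\ref{prop:fermat_prime_power}, now with the elementary abelian group $G = (\mathbb{Z}_p)^r$ acting on the cofree $G$-set $C_G(X)$. Here $X$ is a set with $|X| = k$, and I assume $r \geq 1$ so that the exponent $p^{r-1}$ is an integer. The count is actually simpler than in the cyclic case, because every non-identity element of $(\mathbb{Z}_p)^r$ has the same order.

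\textbf{Step 1 (the identity).} For the identity $0 \in G$ every function is fixed. Hence $\lvert C_G(X)^0 \rvert = \lvert X^G \rvert = k^{p^r}$.

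\textbf{Step 2 (non-identity elements).} Let $g \neq 0$ in $(\mathbb{Z}_p)^r$.
\begin{itemize}
\item Since every non-zero element has order exactly $p$, the subgroup $\langle g \rangle$ has order $p$ and index $p^{r-1}$.
\item By Proposition~\ref{prop:fixed_points_element}, $\phi \in C_G(X)^g$ if and only if $\phi$ is constant on each coset of $\langle g \rangle$.
\item The values on the $p^{r-1}$ cosets can be chosen independently in $X$, so $\lvert C_G(X)^g \rvert = k^{p^{r-1}}$.
\item There are exactly $p^r - 1$ such elements $g$.
\end{itemize}

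\textbf{Step 3 (Burnside).} Applying Burnside's Lemma (Proposition~\ref{prop:burnside}) to the finite $G$-set $C_G(X)$ gives
\[
\lvert C_G(X)/G \rvert = \frac{1}{p^r}\left( k^{p^r} + (p^r - 1)\, k^{p^{r-1}} \right).
\]
The left-hand side counts orbits, so it is a non-negative integer. Therefore $p^r$ divides $k^{p^r} + (p^r - 1)\,k^{p^{r-1}}$.

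There is no real obstacle here. The only point that needs a sentence of justification is that every non-zero element of $(\mathbb{Z}_p)^r$ has order exactly $p$. This holds because $p g = 0$ componentwise, and $p$ is prime, so the order of $g \neq 0$ is a divisor of $p$ other than $1$.
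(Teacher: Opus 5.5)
Your proof is correct and follows the paper's argument: Burnside's Lemma is applied to $C_G(X)$ for $G=(\mathbb{Z}_p)^r$, where the identity contributes $k^{p^r}$ and each of the $p^r-1$ non-identity elements, all of order $p$, contributes $k^{p^{r-1}}$. The paper leaves implicit two points you make explicit, namely the assumption $r \geq 1$ and why non-zero elements have order exactly $p$.
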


\begin{proof}
Let $G = (\mathbb{Z}_p)^r$ be the elementary abelian $p$-group of order $p^r$, and let $X$ be a set with $|X| = k$. We consider the cofree $G$-set $C_G(X) = X^G$.

We apply Burnside's Lemma (Proposition~\ref{prop:burnside}) to this $G$-set. To do so, we compute the number of fixed points $|C_G(X)^g|$ for each $g \in G$.

By the characterization of fixed points obtained in Proposition~\ref{prop:fixed_points_element}, for any $g \in G$, we have
\[
\left\lvert C_G(X)^g \right\rvert = |X|^{[G : \langle g \rangle]} = k^{\, p^r / \operatorname{ord}(g)}.
\]

Now we classify the elements of $G$ by their order. For the identity element $g=e$, we have $\operatorname{ord}(g)=1$, and therefore $\left\lvert C_G(X)^e \right\rvert = |C_G(X)| = k^{\,p^r}$. For any non-identity element $g \neq e$, since $G$ is an elementary abelian $p$-group, it follows that $\operatorname{ord}(g)=p$. Hence, $\left\lvert C_G(X)^g \right\rvert = k^{\, p^r / p} = k^{\,p^{r-1}}$, and the number of such elements in $G$ is exactly $p^r - 1$.

Summing the contributions of all elements of $G$ and applying Burnside's Lemma, we obtain that the number of orbits of the $G$-action on $C_G(X)$ is
\[
\left\lvert C_G(X) / G \right\rvert
= \frac{1}{p^r}
\left( k^{\,p^r} + (p^r - 1)\, k^{\,p^{r-1}} \right).
\]

Since the number of orbits is necessarily an integer, the numerator must be divisible by $p^r$. Therefore,
\[
p^r \mid k^{\,p^r} + (p^r - 1)\, k^{\,p^{r-1}}.
\]
This completes the proof.
\end{proof}

We now turn to a non-abelian example. Applying the cofree construction to the dihedral group yields another classical congruence, which further illustrates the versatility of Burnside's Lemma in this context.

Consider the dihedral group $D_n$ of order $2n$, defined for $n \geq 3$ as the group of symmetries of a regular $n$-gon. Algebraically, it admits the presentation
\[
D_n = \langle r, s \mid r^n = 1,\; s^2 = 1,\; srs = r^{-1} \rangle,
\]
where $r$ denotes a rotation of angle $2\pi/n$ and $s$ denotes a reflection. Thus every element of $D_n$ can be uniquely written as $r^i$ or $sr^i$, with $0 \leq i < n$. The rotations $r^i$ form a cyclic subgroup of order $n$, while the remaining $n$ elements are reflections, each of order $2$. Hence $D_n$ is non-abelian for $n \geq 3$, since $sr \neq rs$ in general. This structure makes it an excellent test case for the cofree construction: the different orders of its elements lead to distinct contributions in the fixed-point sum $\sum_{g \in D_n} |C_{D_n}(X)^g|$, which will ultimately yield a classical congruence depending on the parity of $n$.

\begin{proposition}\label{prop:dihedral_congruence}
Let $n \ge 3$ be a natural number and $k$ a natural number. Let $D_n$ be the dihedral group of order $2n$. Then
\[
2n \mid \sum_{d \mid n} \varphi(d)\, k^{\,2n/d} + n k^{\,n},
\]
where $\varphi$ denotes Euler's totient function.
\end{proposition}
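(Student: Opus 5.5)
The plan is to apply Burnside's Lemma (Proposition~\ref{prop:burnside}) to the cofree $D_n$-set $C_{D_n}(X) = X^{D_n}$, where $X$ is a set with $|X| = k$. As in Propositions~\ref{prop:fermat_general} and~\ref{prop:fermat_elementary_abelian}, Proposition~\ref{prop:fixed_points_element} gives, for every $g \in D_n$,
\[
\left\lvert C_{D_n}(X)^g \right\rvert = k^{[D_n : \langle g \rangle]} = k^{\,2n/\operatorname{ord}(g)}.
\]
A fixed function is constant on each right coset of $\langle g\rangle$, and the number of right cosets equals the index. So the whole proof reduces to counting the elements of $D_n$ according to their orders.

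The elements split into two families. The rotations $r^i$, $0 \le i < n$, form the cyclic subgroup $\langle r\rangle \cong \mathbb{Z}_n$. For each divisor $d$ of $n$ there are exactly $\varphi(d)$ rotations of order $d$, by the same fact about cyclic groups used in Proposition~\ref{prop:fermat_general}. Together the rotations contribute $\sum_{d \mid n} \varphi(d)\, k^{2n/d}$.

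The reflections $sr^i$ number $n$. I would check that each has order $2$: using $srs = r^{-1}$, we get $(sr^i)^2 = s r^i s r^i = r^{-i} r^i = 1$, and $sr^i \neq 1$. Each reflection therefore contributes $k^{2n/2} = k^{n}$, for a total of $n k^n$.

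Summing both families, Burnside's Lemma gives
\[
\left\lvert C_{D_n}(X)/D_n \right\rvert = \frac{1}{2n}\left( \sum_{d \mid n} \varphi(d)\, k^{\,2n/d} + n k^{\,n} \right).
\]
The left-hand side counts orbits, so it is an integer, and the divisibility follows.

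The main step needing care is the element classification: the rotations must be counted by order through the cyclic structure of $\langle r \rangle$, and the identity appears there as the $d=1$ term with $\varphi(1)=1$, contributing $k^{2n}$. The reflections must be shown to all have order exactly $2$, which is the short computation above. Everything else is a direct repetition of the earlier Burnside arguments.
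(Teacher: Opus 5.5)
Your proposal is correct and follows essentially the same route as the paper: Burnside's Lemma applied to $C_{D_n}(X)$, with fixed-point counts $k^{2n/\operatorname{ord}(g)}$ from Proposition~\ref{prop:fixed_points_element}, rotations grouped by order via $\varphi(d)$, and the $n$ reflections each contributing $k^{n}$. Your explicit check that $(sr^i)^2 = 1$, using $sr^is = r^{-i}$, supplies a detail the paper only asserts.
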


\begin{proof}
Let $G = D_n$, the dihedral group of order $2n$, which consists of $n$ rotations and $n$ reflections. Let $X$ be a set with $|X| = k$, and consider the cofree $G$-set $C_G(X) = X^G$.

We apply Burnside's Lemma (Proposition~\ref{prop:burnside}) to this $G$-set. To do so, we compute the number of fixed points $|C_G(X)^g|$ for each $g \in G$.

By the characterization of fixed points obtained in Proposition~\ref{prop:fixed_points_element}, for any element $g \in G$, we have
\[
\left\lvert C_G(X)^g \right\rvert = |X|^{[G : \langle g \rangle]} = k^{\, 2n / \operatorname{ord}(g)}.
\]

We now classify the elements of $D_n$ by their order.

\begin{itemize}
\item \textbf{Rotations:} There are $n$ rotations. For each divisor $d$ of $n$, the cyclic subgroup of rotations contains exactly $\varphi(d)$ elements of order $d$. For such an element $g$, we have $\operatorname{ord}(g) = d$, hence
\[
\left\lvert C_G(X)^g \right\rvert = k^{\, 2n / d}.
\]

\item \textbf{Reflections:} There are $n$ reflections. Every reflection has order $2$. Therefore, for each reflection $g$, we have $\operatorname{ord}(g) = 2$, and consequently
\[
\left\lvert C_G(X)^g \right\rvert = k^{\, 2n / 2} = k^{\, n}.
\]
Since there are $n$ reflections, their total contribution to the sum of fixed points is $n k^{\, n}$.
\end{itemize}

Summing the contributions from all elements of $G$ and applying Burnside's Lemma, we obtain that the number of orbits of the $G$-action on $C_G(X)$ is
\[
\left\lvert C_G(X) / G \right\rvert
= \frac{1}{2n}
\left( \sum_{d \mid n} \varphi(d)\, k^{\, 2n/d} + n k^{\, n} \right).
\]

Since the number of orbits is necessarily an integer, the numerator must be divisible by $2n$. Therefore,
\[
2n \mid \sum_{d \mid n} \varphi(d)\, k^{\, 2n/d} + n k^{\, n}.
\]
This completes the proof.
\end{proof}

\section{Approximations to the Cofree Functor}
\label{sec:approx}

We now introduce a family of approximations to the cofree functor, parameterized by subgroups of $G$. These generalized cofree $G$-sets will be useful when studying induced actions.

\begin{definition}\label{def:cofree_induced}
Let $G$ be a group and $H$ a subgroup of $G$. For a set $X$, we define $C_H^G(X)$ as the set of all functions from the coset space $G/H$ to $X$.

We endow $C_H^G(X)$ with a $G$-action defined by
\[
(g \phi)(aH) = \phi(g^{-1}aH)
\]
for all $\phi \in C_H^G(X)$, $aH \in G/H$, and $g \in G$. This action is well-defined and makes $C_H^G(X)$ a $G$-set.

For a map $f \colon X \longrightarrow Y$, we define the induced map
\[
C_H^G(f) \colon C_H^G(X) \longrightarrow C_H^G(Y)
\]
by
\[
C_H^G(f)(\phi) = f \circ \phi,
\]
for all $\phi \in C_H^G(X)$.
\end{definition}

When $H = \{e\}$ is the trivial subgroup, we have $G/H \cong G$, so the definition above coincides with the original cofree functor $C_{\{e\}}^G(X) \cong C_G(X)$. Thus the original cofree functor is a special case of this more general construction.

Having defined the generalized cofree $G$-set $C_H^G(X)$, we now verify that this construction is functorial in the set variable $X$.

\begin{proposition}\label{prop:cofree_induced_functor}
Let $G$ be a group and $H$ a subgroup of $G$. Then the assignment
\[
C_H^G \colon \mathbf{Set} \longrightarrow G\text{-}\mathbf{Set}
\]
is functorial.
\end{proposition}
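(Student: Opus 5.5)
The proof will follow the pattern of Proposition~\ref{prop:cofree_functor}, with $G$ replaced throughout by the coset space $G/H$. Definition~\ref{def:cofree_induced} asserts that the action is well defined but does not verify it, so I will check this first.

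\textbf{The action.} For a coset $aH$, the element $g^{-1}aH$ is the image of $aH$ under left multiplication by $g^{-1}$. If $aH=bH$, then $g^{-1}aH=g^{-1}bH$, so $(g\phi)(aH)=\phi(g^{-1}aH)$ does not depend on the representative $a$. The action axioms are then the same computation as in Proposition~\ref{prop:cofree_Gset}:
\[
((gh)\phi)(aH)=\phi(h^{-1}g^{-1}aH)=(h\phi)(g^{-1}aH)=(g(h\phi))(aH),
\]
and $(e\phi)(aH)=\phi(aH)$. Hence $C_H^G(X)$ is a $G$-set.

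\textbf{Equivariance of $C_H^G(f)$.} Let $f\colon X\to Y$, $\phi\in C_H^G(X)$, $g\in G$ and $aH\in G/H$. Then
\[
C_H^G(f)(g\phi)(aH)=f(\phi(g^{-1}aH))=\bigl(g\,C_H^G(f)(\phi)\bigr)(aH).
\]
This says that post-composition with $f$ commutes with pre-composition with the translation $aH\mapsto g^{-1}aH$. So $C_H^G(f)$ is a morphism of $G$-sets.

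\textbf{Identities and composition.} Since $C_H^G(f)$ is post-composition with $f$, these are immediate:
\[
C_H^G(1_X)(\phi)=1_X\circ\phi=\phi,\qquad C_H^G(\beta\alpha)(\phi)=\beta\circ(\alpha\circ\phi)=C_H^G(\beta)\bigl(C_H^G(\alpha)(\phi)\bigr).
\]

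Everything here is routine. The only step with real content is the well-definedness of the action on cosets. It relies on the left $G$-action on $G/H$ being well defined, which holds for any subgroup $H$, not necessarily normal.
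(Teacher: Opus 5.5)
Your proof is correct and essentially matches the paper's: it checks that post-composition with $f$ is $G$-equivariant and then verifies identities and composition. The only addition is your verification that the action on $C_H^G(X)$ is well defined and satisfies the action axioms for an arbitrary subgroup $H$, which the paper asserts in Definition~\ref{def:cofree_induced} without proof.
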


\begin{proof}
First, let $f \colon X \longrightarrow Y$ be a map of sets. We verify that $C_H^G(f) \colon C_H^G(X) \longrightarrow C_H^G(Y)$ is a morphism of $G$-sets. For any $\phi \in C_H^G(X)$, $aH \in G/H$, and $g \in G$, we have
\[
\bigl(g C_H^G(f)(\phi)\bigr)(aH)
= C_H^G(f)(\phi)(g^{-1}aH)
= f\bigl(\phi(g^{-1}aH)\bigr),
\]
while
\[
C_H^G(f)(g \phi)(aH)
= f\bigl((g \phi)(aH)\bigr)
= f\bigl(\phi(g^{-1}aH)\bigr).
\]
Hence $g C_H^G(f)(\phi) = C_H^G(f)(g \phi)$, so $C_H^G(f)$ is $G$-equivariant.

Next, for the identity map $1_X \colon X \longrightarrow X$, and for any $\phi \in C_H^G(X)$, we have
\[
C_H^G(1_X)(\phi) = 1_X \circ \phi = \phi.
\]
Thus $C_H^G(1_X) = 1_{C_H^G(X)}$.

Finally, let $f \colon X \longrightarrow Y$ and $g \colon Y \longrightarrow Z$ be maps of sets. For any $\phi \in C_H^G(X)$,
\[
C_H^G(g \circ f)(\phi) = (g \circ f) \circ \phi = g \circ (f \circ \phi) = C_H^G(g)\bigl(C_H^G(f)(\phi)\bigr).
\]
Therefore $C_H^G(g \circ f) = C_H^G(g) \circ C_H^G(f)$.

Since all three conditions are satisfied, $C_H^G$ is a functor.
\end{proof}

The functors $C_G$ and $C_H^G$ are closely related, but $C_H^G$ cannot be a right adjoint to the full forgetful functor $U_G \colon G\text{-}\mathbf{Set} \rightarrow \mathbf{Set}$. 
Besides the uniqueness of adjoints (since $C_G$ is already a right adjoint to $U_G$), a direct attempt to construct the natural isomorphism reveals a fundamental well-definedness issue.

Indeed, suppose we try to build a natural isomorphism
\[
\theta^H_{XY} \colon \operatorname{Hom}_{\mathbf{Set}}(U_G(X), Y) 
\longrightarrow \operatorname{Hom}_{G\text{-}\mathbf{Set}}(X, C_H^G(Y))
\]
for a $G$-set $X$ and a set $Y$, mimicking the cofree case. A natural candidate would be
\[
\theta^H_{XY}(\alpha)(x)(gH) = \alpha(g^{-1}x),
\]
for $\alpha \in \operatorname{Hom}_{\mathbf{Set}}(U_G(X), Y)$, $x \in X$, and $gH \in G/H$. However, this map is not well-defined in general. If $gH = g'hH$ for some $h \in H$ (i.e., $g' = gh$), we would need
\[
\alpha(g^{-1}x) = \alpha((gh)^{-1}x) = \alpha(h^{-1}g^{-1}x).
\]
For this equality to hold for every arbitrary map $\alpha \colon X \rightarrow Y$, we must have $g^{-1}x = h^{-1}g^{-1}x$ for all $x \in X$, which is equivalent to $h x = x$ for all $h \in H$. In other words, $H$ must act trivially on $X$. Since this is not true for arbitrary $G$-sets, the adjunction fails.

Motivated by this obstruction, we introduce a restricted class of $G$-sets that arise naturally from the transitive $G$-set $G/H$.

\begin{definition}\label{def:H_generated}
Let $G$ be a group and $H$ a subgroup of $G$. We say that a $G$-set $X$ is \emph{$H$-generated} if there exists a $G$-equivariant epimorphism
\[
\coprod_{i \in I} G/H \longrightarrow X
\]
for some indexing set $I$. Equivalently, $X$ is a quotient of a coproduct of copies of the transitive $G$-set $G/H$. We denote the full subcategory of $H$-generated $G$-sets by $G\text{-}\mathbf{Set}_H$.
\end{definition}

We now characterize the $H$-generated $G$-sets among those that are disjoint unions of transitive $G$-sets. The condition is naturally expressed in terms of conjugacy of subgroups.

\begin{proposition}\label{prop:H_generated_criterion}
Let $G$ be a group and let
\[
X = \bigsqcup_{i \in I} G/H_i
\]
be a disjoint union of transitive $G$-sets. Then $X$ is $H$-generated if and only if for each $i \in I$, the subgroup $H$ is subconjugate to $H_i$ in $G$, that is, there exists an element $g_i \in G$ such that
\[
g_i^{-1} H g_i \subseteq H_i.
\]
\end{proposition}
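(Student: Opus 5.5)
The whole argument rests on the standard description of equivariant maps out of a coset space. Equivariant maps $G/H \to Y$ correspond bijectively to points $y \in Y$ with $H \subseteq \mathrm{Stab}(y)$, via $f \mapsto f(eH)$. Specialized to $Y = G/K$, an equivariant map $G/H \to G/K$ exists if and only if some coset $gK$ is fixed by $H$. That condition reads $HgK = gK$, i.e. $g^{-1}Hg \subseteq K$. Any such map has the form $aH \mapsto agK$, and it is surjective because $G/K$ is transitive. I will first record this lemma: a point $y$ of a $G$-set lies in the image of some equivariant map $G/H \to Y$ if and only if $H \subseteq \mathrm{Stab}(y)$. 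Well-definedness of $aH \mapsto ay$ is exactly this stabilizer condition.

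For $(\Leftarrow)$, assume $g_i^{-1}Hg_i \subseteq H_i$ for each $i$. Define $\pi_i \colon G/H \to G/H_i$ by $aH \mapsto ag_iH_i$. This is well defined, since for $h \in H$ we have $ahg_iH_i = ag_i(g_i^{-1}hg_i)H_i = ag_iH_i$. It is equivariant and surjective by transitivity. Take $I$ itself as the index set and form $\coprod_{i\in I} \pi_i \colon \coprod_{i \in I} G/H \to X$. This is an equivariant surjection, which is an epimorphism in $G\text{-}\mathbf{Set}$, so $X$ is $H$-generated.

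For $(\Rightarrow)$, let $p \colon \coprod_{j \in J} G/H \to X$ be an equivariant surjection. Fix $i \in I$ and pick $j \in J$ and $aH$ whose image lies in the summand $G/H_i$. Such a pair exists: the point $eH_i$ has a preimage, and $p$ is surjective as a map of sets because the underlying functor $U_G$ is a left adjoint (Proposition~\ref{prop:cofree_adjoint}), so epimorphisms go to epimorphisms. Transitivity of the $j$-th copy of $G/H$ means its image is a single orbit, so $p$ restricted to that copy lands entirely inside $G/H_i$. This gives an equivariant map $G/H \to G/H_i$, and by the lemma there is $g_i$ with $g_i^{-1}Hg_i \subseteq H_i$.

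The main subtle point is the claim that an epimorphism in $G\text{-}\mathbf{Set}$ is surjective, together with the fact that the image of a transitive summand lies in one orbit. I handle the first through the adjunction as above, and the second is immediate from equivariance.
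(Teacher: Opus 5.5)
Your proof is correct and follows the paper's argument essentially step for step. For $(\Leftarrow)$ you glue the maps $aH\mapsto ag_iH_i$ over $I$. For $(\Rightarrow)$ you restrict the epimorphism to a summand meeting $G/H_i$ and read off $g_i^{-1}Hg_i\subseteq H_i$ from the $H$-fixed coset $f(eH)=g_iH_i$. Your appeal to $U_G$ being a left adjoint also supplies a justification the paper takes for granted, namely that epimorphisms in $G\text{-}\mathbf{Set}$ are surjective.

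There is one wording slip. The ``lemma'' as you state it is false for non-normal $H$: a point $y=f(aH)$ in the image of an equivariant map $G/H\to Y$ only satisfies $aHa^{-1}\subseteq\mathrm{Stab}(y)$. In particular, your chosen point $eH_i$ need not be fixed by $H$. The lemma should say that $y$ is the image of $eH$ under some equivariant map iff $H\subseteq\mathrm{Stab}(y)$. That is the version you actually use, so your conclusion stands.
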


\begin{proof}
$(\Rightarrow)$ Suppose $X$ is $H$-generated. Then there exists a $G$-equivariant epimorphism
\[
\pi \colon \coprod_{j \in J} G/H \longrightarrow X,
\]
for some indexing set $J$. Fix an index $i \in I$. Since $G/H_i$ is a connected component of $X$, and $\pi$ is surjective, there must be some index $j \in J$ such that the image of the corresponding copy of $G/H$ under $\pi$ is exactly $G/H_i$ (because the image of a transitive $G$-set under a $G$-equivariant map is a single orbit). Thus we obtain a $G$-equivariant epimorphism
\[
\pi_{ij} \colon G/H \longrightarrow G/H_i.
\]

Now, any $G$-equivariant map $f \colon G/H \rightarrow G/H_i$ is determined by the image of the coset $H$. Let $f(H) = g H_i$ for some $g \in G$. By equivariance, for any $h \in H$,
\[
g H_i = f(H) = f(hH) = h f(H) = h g H_i.
\]
Hence $h g H_i = g H_i$, which implies $g^{-1} h g \in H_i$ for all $h \in H$. Therefore $g^{-1} H g \subseteq H_i$, so $H$ is subconjugate to $H_i$. Since $i$ was arbitrary, the condition holds for all $i$.

$(\Leftarrow)$ Conversely, suppose that for each $i \in I$ there exists $g_i \in G$ with $g_i^{-1} H g_i \subseteq H_i$. Then the map
\[
\pi_i \colon G/H \longrightarrow G/H_i, \qquad aH \longmapsto a g_i H_i
\]
is well-defined and $G$-equivariant (because $g_i^{-1} H g_i \subseteq H_i$). Moreover, it is surjective since $G$ acts transitively on $G/H_i$.

Taking the coproduct of all these maps over $i \in I$, we obtain a $G$-equivariant map
\[
\pi \colon \coprod_{i \in I} G/H \longrightarrow \bigsqcup_{i \in I} G/H_i = X,
\]
which is surjective because each component map is. Hence $X$ is $H$-generated.
\end{proof}

The obstruction to the adjunction of $C_H^G$ motivates the study of a related concept: the $H$-trace of a $G$-set. This will allow us to isolate the part of a $G$-set that is "visible" from the transitive $G$-set $G/H$.

\begin{definition}\label{def:H_trace}
Let $G$ be a group and $H$ a subgroup of $G$. For a $G$-set $X$, we define its \emph{$H$-trace} as
\[
\operatorname{Tr}_H(X) = \bigcup \bigl\{ \operatorname{im}(\alpha) \mid \alpha \in \operatorname{Hom}_{G\text{-}\mathbf{Set}}(G/H, X) \bigr\}.
\]
In other words, $\operatorname{Tr}_H(X)$ is the union of the images of all $G$-equivariant maps from the transitive $G$-set $G/H$ into $X$.
\end{definition}

\begin{proposition}\label{prop:tr_preradical}
Let $G$ be a group and $H$ a subgroup of $G$. Then $\operatorname{Tr}_H$ is a preradical on the category $G\text{-}\mathbf{Set}$.
\end{proposition}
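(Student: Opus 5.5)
We must show two things: that $\operatorname{Tr}_H(X)$ is a $G$-subset of $X$, so that the inclusion $i^X \colon \operatorname{Tr}_H(X) \to X$ is a subobject in $G\text{-}\mathbf{Set}$, and that every equivariant map $f \colon X \to Y$ restricts to a map $\operatorname{Tr}_H(X) \to \operatorname{Tr}_H(Y)$. Once these hold, setting $\operatorname{Tr}_H(f) = f|_{\operatorname{Tr}_H(X)}$ gives $f \circ i^X = i^Y \circ \operatorname{Tr}_H(f)$ by construction. Functoriality of $\operatorname{Tr}_H$ is then inherited from that of the identity functor, since the restriction of an identity is an identity and the restriction of a composite is the composite of restrictions. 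This makes $\operatorname{Tr}_H$ a subfunctor of $\mathrm{Id}_{G\text{-}\mathbf{Set}}$.

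\textbf{Closure under the action.} Let $x \in \operatorname{Tr}_H(X)$. Then $x = \alpha(aH)$ for some equivariant $\alpha \colon G/H \to X$ and some $aH \in G/H$. For $g \in G$, equivariance gives $gx = g\alpha(aH) = \alpha(gaH)$, which again lies in $\operatorname{im}(\alpha) \subseteq \operatorname{Tr}_H(X)$. More simply, the image of an equivariant map is a $G$-subset, and a union of $G$-subsets is a $G$-subset. Hence $\operatorname{Tr}_H(X)$ is a $G$-set with the restricted action, and the inclusion is an equivariant monomorphism.

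\textbf{Compatibility with morphisms.} Let $f \colon X \to Y$ be equivariant and $x = \alpha(aH) \in \operatorname{Tr}_H(X)$ as above. The composite $f \circ \alpha \colon G/H \to Y$ is equivariant. Therefore $f(x) = (f\circ\alpha)(aH) \in \operatorname{im}(f \circ \alpha) \subseteq \operatorname{Tr}_H(Y)$. So $f(\operatorname{Tr}_H(X)) \subseteq \operatorname{Tr}_H(Y)$, and the restriction $\operatorname{Tr}_H(f)$ is well defined. It is equivariant because it is a restriction of the equivariant map $f$.

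There is no real obstacle here. The only point requiring care is that a subobject in $G\text{-}\mathbf{Set}$ must be a $G$-stable subset, not merely a subset; this is exactly what the closure step establishes. The case $\operatorname{Tr}_H(X) = \emptyset$, which occurs when there are no maps $G/H \to X$, is harmless, since the empty set is a $G$-subset.
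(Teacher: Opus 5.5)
Your proposal is correct and follows essentially the same route as the paper: closure of $\operatorname{Tr}_H(X)$ under the action via $g\alpha(aH)=\alpha(gaH)$, and compatibility with an equivariant $f$ by observing that $f\circ\alpha$ is again an equivariant map out of $G/H$. Your extra remarks on functoriality of the restrictions and on the case $\operatorname{Tr}_H(X)=\emptyset$ are valid additions that the paper leaves implicit.
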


\begin{proof}
We must verify the two defining properties of a preradical.

First, we show that $\operatorname{Tr}_H(X)$ is a $G$-subobject of $X$. Let $X$ be a $G$-set, take $x \in \operatorname{Tr}_H(X)$, and let $g \in G$. By definition of the trace, there exists a $G$-equivariant map $\alpha \colon G/H \longrightarrow X$ and an element $aH \in G/H$ such that $\alpha(aH) = x$. Since $\alpha$ is $G$-equivariant,
\[
g x = g \alpha(aH) = \alpha(g aH).
\]
Thus $g x \in \operatorname{im}(\alpha) \subseteq \operatorname{Tr}_H(X)$. Therefore $\operatorname{Tr}_H(X)$ is a $G$-subset of $X$.

Second, we verify functoriality under morphisms. Let $\beta \colon X \longrightarrow Y$ be a $G$-equivariant map, and let $x \in \operatorname{Tr}_H(X)$. Again, there exists a $G$-equivariant map $\alpha \colon G/H \longrightarrow X$ such that $x \in \operatorname{im}(\alpha)$. Then the composition
\[
\beta \circ \alpha \colon G/H \longrightarrow Y
\]
is also $G$-equivariant, and
\[
\beta(x) = \beta(\alpha(aH)) = (\beta \circ \alpha)(aH) \in \operatorname{im}(\beta \circ \alpha) \subseteq \operatorname{Tr}_H(Y).
\]
Hence $\beta(\operatorname{Tr}_H(X)) \subseteq \operatorname{Tr}_H(Y)$. This proves that $\operatorname{Tr}_H$ is a preradical on $G\text{-}\mathbf{Set}$.
\end{proof}

Observe that $\operatorname{Tr}_H(X)$ is precisely the largest $H$-generated $G$-subobject of $X$. Indeed, if $X$ is itself $H$-generated, then $\operatorname{Tr}_H(X) = X$.

Having established that $\operatorname{Tr}_H$ is a preradical, we now show that it is idempotent. This means that applying the trace operation twice yields no new elements; the $H$-trace of an $H$-trace is already the full $H$-trace.

\begin{proposition}\label{prop:tr_idempotent}
Let $G$ be a group and $H$ a subgroup of $G$. Then the preradical $\operatorname{Tr}_H$ is idempotent. That is, for every $G$-set $X$,
\[
\operatorname{Tr}_H\bigl(\operatorname{Tr}_H(X)\bigr) = \operatorname{Tr}_H(X).
\]
\end{proposition}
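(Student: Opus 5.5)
We prove the two inclusions separately, writing $T = \operatorname{Tr}_H(X)$ and $i \colon T \hookrightarrow X$ for the inclusion, which is a $G$-subset by Proposition~\ref{prop:tr_preradical}.

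The inclusion $\operatorname{Tr}_H(T) \subseteq T$ is immediate. Every $G$-equivariant map $\beta \colon G/H \to T$ composes with $i$ to give a $G$-equivariant map $i\circ\beta \colon G/H \to X$. Its image is the same subset $\operatorname{im}(\beta)$, so $\operatorname{im}(\beta) \subseteq T$. Equivalently, this is the general fact that a preradical applied to a subobject lands inside that subobject: apply the functoriality of Proposition~\ref{prop:tr_preradical} to $i$.

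The substantive direction is $T \subseteq \operatorname{Tr}_H(T)$. Take $x \in T$. By Definition~\ref{def:H_trace} there is a $G$-equivariant $\alpha \colon G/H \to X$ and a coset $aH$ with $\alpha(aH) = x$. The key observation is that $\operatorname{im}(\alpha) \subseteq T$ by the very definition of $T$ as the union of all such images. Hence $\alpha$ corestricts to a map $\alpha' \colon G/H \to T$ with $i \circ \alpha' = \alpha$. This $\alpha'$ is still $G$-equivariant, because the $G$-action on $T$ is the restriction of the action on $X$. Therefore $x = \alpha'(aH) \in \operatorname{im}(\alpha') \subseteq \operatorname{Tr}_H(T)$.

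Combining the two inclusions gives the equality $\operatorname{Tr}_H(\operatorname{Tr}_H(X)) = \operatorname{Tr}_H(X)$. The only point requiring care, and the main obstacle such as it is, is checking that the corestriction $\alpha'$ is a morphism in $G\text{-}\mathbf{Set}$. This follows because $T$ carries the subobject structure established in Proposition~\ref{prop:tr_preradical}, so equivariance of $\alpha'$ is inherited directly from that of $\alpha$.
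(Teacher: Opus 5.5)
Your proposal is correct and matches the paper's proof: the nontrivial inclusion $\operatorname{Tr}_H(X)\subseteq \operatorname{Tr}_H(\operatorname{Tr}_H(X))$ comes from corestricting each equivariant $\alpha\colon G/H\to X$ to $\operatorname{Tr}_H(X)$, which is possible because $\operatorname{im}(\alpha)\subseteq \operatorname{Tr}_H(X)$, and the reverse inclusion is just $\operatorname{Tr}_H(Y)\subseteq Y$. Your detour through composing with $i$ for the easy direction is more than needed, since any map into $T$ trivially has its image in $T$.
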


\begin{proof}
Let $X$ be a $G$-set and set $Y = \operatorname{Tr}_H(X)$. We must show that $\operatorname{Tr}_H(Y) = Y$.

First, we prove the inclusion $Y \subseteq \operatorname{Tr}_H(Y)$. Take $x \in Y$. By the definition of the $H$-trace, there exists a $G$-equivariant map $\alpha \colon G/H \longrightarrow X$ and an element $aH \in G/H$ such that $\alpha(aH) = x$. Since $\operatorname{im}(\alpha) \subseteq \operatorname{Tr}_H(X) = Y$, we may consider the corestriction
\[
\overline{\alpha} \colon G/H \longrightarrow Y, \qquad \overline{\alpha}(gH) = \alpha(gH).
\]
This map is well-defined and remains $G$-equivariant because it is the same formula as $\alpha$ with its codomain restricted to $Y$. Moreover,
\[
\overline{\alpha}(aH) = \alpha(aH) = x.
\]
Thus $x \in \operatorname{im}(\overline{\alpha}) \subseteq \operatorname{Tr}_H(Y)$. Hence $Y \subseteq \operatorname{Tr}_H(Y)$.

Conversely, since $\operatorname{Tr}_H$ is a preradical, we already know from Proposition~\ref{prop:tr_preradical} that $\operatorname{Tr}_H(Y)$ is a $G$-subset of $Y$. Therefore $\operatorname{Tr}_H(Y) \subseteq Y$.

Combining both inclusions, we obtain
\[
\operatorname{Tr}_H\bigl(\operatorname{Tr}_H(X)\bigr) = \operatorname{Tr}_H(Y) = Y = \operatorname{Tr}_H(X).
\]
Thus $\operatorname{Tr}_H$ is idempotent.
\end{proof}

The following result provides a fundamental criterion: a $G$-set is $H$-generated precisely when its $H$-trace is the whole set.

\begin{proposition}\label{prop:H_generated_trace_criterion}
Let $G$ be a group, $H$ a subgroup of $G$, and $X$ a $G$-set. Then $X$ is $H$-generated if and only if $\operatorname{Tr}_H(X) = X$.
\end{proposition}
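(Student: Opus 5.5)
We prove the two implications separately. In both directions we work directly from Definition~\ref{def:H_generated} and Definition~\ref{def:H_trace}. The key observation is that a $G$-equivariant map out of a coproduct $\coprod_{i\in I} G/H$ is the same thing as a family of $G$-equivariant maps $G/H \to X$, one for each $i\in I$, and its image is the union of their images.

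$(\Rightarrow)$ Suppose $\pi \colon \coprod_{i\in I} G/H \to X$ is a $G$-equivariant epimorphism. Epimorphisms in $G\text{-}\mathbf{Set}$ are surjective, since the forgetful functor $U_G$ has the right adjoint $C_G$ (Proposition~\ref{prop:cofree_adjoint}) and hence preserves epimorphisms. For each $i$, precompose $\pi$ with the coproduct inclusion $\iota_i \colon G/H \to \coprod_{i\in I} G/H$. This gives an equivariant map $\pi\iota_i \colon G/H \to X$. Every $x\in X$ has the form $\pi(\iota_i(aH))$ for some $i$ and some $a\in G$, so $x\in \operatorname{im}(\pi\iota_i)\subseteq \operatorname{Tr}_H(X)$. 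Thus $X\subseteq \operatorname{Tr}_H(X)$. The reverse inclusion holds by the definition of the trace, so $\operatorname{Tr}_H(X)=X$.

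$(\Leftarrow)$ Suppose $\operatorname{Tr}_H(X)=X$. Take the indexing set $I=\operatorname{Hom}_{G\text{-}\mathbf{Set}}(G/H,X)$ and define
\[
\pi \colon \coprod_{\alpha\in I} G/H \longrightarrow X
\]
by $\pi(\alpha,aH)=\alpha(aH)$, the map induced by the universal property of the coproduct. It is $G$-equivariant because each $\alpha$ is. Its image is $\bigcup_{\alpha\in I}\operatorname{im}(\alpha)=\operatorname{Tr}_H(X)=X$, so $\pi$ is surjective and therefore an epimorphism. Hence $X$ is $H$-generated.

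The only delicate point is identifying epimorphisms in $G\text{-}\mathbf{Set}$ with surjections. Definition~\ref{def:H_generated} speaks of epimorphisms, and the $(\Rightarrow)$ direction needs surjectivity. For epi implies surjective we use the left adjoint $U_G$, as above. The converse, surjective implies epi, is immediate, because $U_G$ is faithful (Proposition~\ref{prop:cofree_faithfulness}). If $X$ is empty, then $I$ may be empty and everything holds trivially.
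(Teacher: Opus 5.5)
Your proof is correct and follows the same route as the paper's: in $(\Rightarrow)$ you compose the epimorphism with the coproduct inclusions, and in $(\Leftarrow)$ you assemble equivariant maps $G/H \to X$ into a surjection out of a coproduct. The differences are only refinements: you index the coproduct canonically by $\operatorname{Hom}_{G\text{-}\mathbf{Set}}(G/H,X)$ rather than choosing a map $\alpha_x$ for each $x \in X$, which avoids the axiom of choice, and you explicitly justify that epimorphisms in $G\text{-}\mathbf{Set}$ are exactly the surjections, a point the paper uses without comment.
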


\begin{proof}
$(\Rightarrow)$ Suppose $X$ is $H$-generated. Then there exists a set $I$ and a $G$-equivariant epimorphism
\[
\pi \colon \coprod_{i \in I} G/H \longrightarrow X.
\]
Let $x \in X$ be arbitrary. Since $\pi$ is surjective, there exist $i \in I$ and $gH \in G/H$ such that
\[
\pi(\iota_i(gH)) = x,
\]
where $\iota_i \colon G/H \longrightarrow \coprod_{i \in I} G/H$ is the $i$-th canonical inclusion of the coproduct.
Now, the composition $\pi \circ \iota_i \colon G/H \longrightarrow X$ is a $G$-equivariant map, and its image contains $x$. Hence, by the definition of the $H$-trace,
\[
x \in \operatorname{im}(\pi \circ \iota_i) \subseteq \operatorname{Tr}_H(X).
\]
Since $x \in X$ was arbitrary, we have $X \subseteq \operatorname{Tr}_H(X)$. The reverse inclusion $\operatorname{Tr}_H(X) \subseteq X$ holds trivially by the definition of the trace. Therefore $\operatorname{Tr}_H(X) = X$.

$(\Leftarrow)$ Conversely, suppose $\operatorname{Tr}_H(X) = X$. For each $x \in X$, choose a $G$-equivariant map
\[
\alpha_x \colon G/H \longrightarrow X
\]
such that $x \in \operatorname{im}(\alpha_x)$; such a map exists precisely because $x \in \operatorname{Tr}_H(X)$.

By the universal property of the coproduct, the family $\{\alpha_x\}_{x \in X}$ induces a unique $G$-equivariant map
\[
\alpha \colon \coprod_{x \in X} G/H \longrightarrow X.
\]
We claim that $\alpha$ is surjective. Indeed, for any $x \in X$, there exists some $gH \in G/H$ such that $\alpha_x(gH) = x$. Therefore,
\[
\alpha(\iota_x(gH)) = \alpha_x(gH) = x,
\]
so $x$ lies in the image of $\alpha$. Hence $\alpha$ is a $G$-equivariant epimorphism. This proves that $X$ is $H$-generated.
\end{proof}

We now show that the subcategory of $H$-generated $G$-sets is a coreflective subcategory of the category of all $G$-sets, with the $H$-trace functor serving as the coreflector.

\begin{proposition}\label{prop:tr_coreflector}
Let $G$ be a group and $H$ a subgroup of $G$. Then the full subcategory $G\text{-}\mathbf{Set}_H$ of $H$-generated $G$-sets is coreflective in $G\text{-}\mathbf{Set}$. The coreflector is the $H$-trace functor $\operatorname{Tr}_H$.
\end{proposition}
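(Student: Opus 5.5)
To prove that $G\text{-}\mathbf{Set}_H$ is coreflective with coreflector $\operatorname{Tr}_H$, I will show that the inclusion $J\colon G\text{-}\mathbf{Set}_H \to G\text{-}\mathbf{Set}$ has $\operatorname{Tr}_H$ as a right adjoint. The counit will be the subobject inclusion $i_X \colon \operatorname{Tr}_H(X) \to X$, and I will verify the couniversal property directly: for every $H$-generated $G$-set $A$ and every $G$-equivariant map $f \colon A \to X$, there is a unique equivariant $\bar f \colon A \to \operatorname{Tr}_H(X)$ with $i_X \circ \bar f = f$.

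\textbf{Step 1.} I first check that $\operatorname{Tr}_H$ lands in the subcategory and is a functor. By Proposition~\ref{prop:tr_idempotent}, $\operatorname{Tr}_H(\operatorname{Tr}_H(X)) = \operatorname{Tr}_H(X)$. By Proposition~\ref{prop:H_generated_trace_criterion}, this means $\operatorname{Tr}_H(X)$ is $H$-generated, so it is an object of $G\text{-}\mathbf{Set}_H$. Functoriality on morphisms is the restriction $\operatorname{Tr}_H(\beta)$ supplied by Proposition~\ref{prop:tr_preradical}. Naturality of $i$ is exactly the preradical condition $\beta \circ i_X = i_Y \circ \operatorname{Tr}_H(\beta)$.

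\textbf{Step 2.} This is the key step: factorization. Let $A$ be $H$-generated and $f\colon A\to X$ equivariant. Since $\operatorname{Tr}_H$ is a preradical, $f(\operatorname{Tr}_H(A)) \subseteq \operatorname{Tr}_H(X)$. By Proposition~\ref{prop:H_generated_trace_criterion}, $\operatorname{Tr}_H(A)=A$, so $f(A)\subseteq \operatorname{Tr}_H(X)$. Alternatively, one can argue pointwise: for $a\in A$, pick $\alpha\colon G/H\to A$ with $a$ in its image; then $f\circ\alpha$ is an equivariant map from $G/H$ into $X$ whose image contains $f(a)$. Either way, the corestriction $\bar f$ is well defined and equivariant, and its equivariance is inherited from $f$ because $\operatorname{Tr}_H(X)$ carries the restricted action.

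\textbf{Step 3.} Uniqueness follows because $i_X$ is injective, hence a monomorphism in $G\text{-}\mathbf{Set}$: if $i_X\circ g = i_X\circ g'$, then $g = g'$. This establishes the bijection
\[
\operatorname{Hom}_{G\text{-}\mathbf{Set}_H}(A,\operatorname{Tr}_H(X)) \cong \operatorname{Hom}_{G\text{-}\mathbf{Set}}(A,X), \qquad g\mapsto i_X\circ g.
\]
Naturality in $A$ is immediate from composition. Naturality in $X$ follows from the naturality of $i$ noted in Step 1. Equivalently, the universal arrow from $J$ to $X$ exists for every $X$, which yields the right adjoint.

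I do not expect a serious obstacle. The only point requiring care is the observation in Step 2 that $H$-generation of $A$ forces the image of $f$ into the trace. This is handled by combining the criterion of Proposition~\ref{prop:H_generated_trace_criterion} with the preradical property.
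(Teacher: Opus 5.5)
Your proof is correct and follows the paper's argument. Existence of $\bar f$ comes from $\operatorname{Tr}_H(A)=A$ (Proposition~\ref{prop:H_generated_trace_criterion}) combined with the preradical property, and uniqueness comes from the inclusion $\operatorname{Tr}_H(X)\hookrightarrow X$ being a monomorphism. Your Step~1, which uses idempotence (Proposition~\ref{prop:tr_idempotent}) to show that $\operatorname{Tr}_H(X)$ actually lies in $G\text{-}\mathbf{Set}_H$, is a check the paper's proof leaves implicit.
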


\begin{proof}
We must verify the universal property of the coreflection. For every $G$-set $Y$, we need to show that the inclusion
\[
\iota_{\operatorname{Tr}_H(Y)} \colon \operatorname{Tr}_H(Y) \hookrightarrow Y
\]
has the following property: for every $H$-generated $G$-set $X$ and every $G$-equivariant map $\alpha \colon X \longrightarrow Y$, there exists a unique $G$-equivariant map
\[
\overline{\alpha} \colon X \longrightarrow \operatorname{Tr}_H(Y)
\]
such that $\alpha = \iota_{\operatorname{Tr}_H(Y)} \circ \overline{\alpha}$.

Since $X$ is $H$-generated, by Proposition~\ref{prop:H_generated_trace_criterion} we have $\operatorname{Tr}_H(X) = X$. Moreover, since $\operatorname{Tr}_H$ is a preradical (Proposition~\ref{prop:tr_preradical}), the map $\alpha$ restricts to a $G$-equivariant map
\[
\operatorname{Tr}_H(\alpha) \colon \operatorname{Tr}_H(X) \longrightarrow \operatorname{Tr}_H(Y).
\]
But $\operatorname{Tr}_H(X) = X$, so we obtain a $G$-equivariant map
\[
\overline{\alpha} := \operatorname{Tr}_H(\alpha) \colon X \longrightarrow \operatorname{Tr}_H(Y).
\]

We claim that this map satisfies the desired factorization. Indeed, for any $x \in X$,
\[
\iota_{\operatorname{Tr}_H(Y)}(\overline{\alpha}(x))
= \iota_{\operatorname{Tr}_H(Y)}(\operatorname{Tr}_H(\alpha)(x))
= \alpha(x),
\]
where the last equality follows from the definition of $\operatorname{Tr}_H(\alpha)$ as the restriction of $\alpha$ to the trace. Hence $\alpha = \iota_{\operatorname{Tr}_H(Y)} \circ \overline{\alpha}$.

It remains to prove uniqueness. Suppose there is another $G$-equivariant map
\[
\beta \colon X \longrightarrow \operatorname{Tr}_H(Y)
\]
such that $\alpha = \iota_{\operatorname{Tr}_H(Y)} \circ \beta$. Since $\iota_{\operatorname{Tr}_H(Y)}$ is a monomorphism (being an inclusion of a subset), we can cancel it on the left:
\[
\iota_{\operatorname{Tr}_H(Y)} \circ \overline{\alpha}
= \iota_{\operatorname{Tr}_H(Y)} \circ \beta
\quad \Longrightarrow \quad
\overline{\alpha} = \beta.
\]
Thus the factorization is unique.

Therefore, for every $Y$, the inclusion $\operatorname{Tr}_H(Y) \hookrightarrow Y$ is a coreflection morphism. This proves that $G\text{-}\mathbf{Set}_H$ is a coreflective subcategory of $G\text{-}\mathbf{Set}$, with coreflector $\operatorname{Tr}_H$.
\end{proof}

For normal subgroups, the set of cosets $G/H$ admits a well-defined right action of $G$, which will be useful when studying the approximations to the cofree functor and their restrictions.

\begin{proposition}\label{prop:normal_right_action}
Let $G$ be a group and $H$ a normal subgroup of $G$. If $gH = hH$, then
\[
gaH = haH \quad \text{for all } a \in G.
\]
In other words, right multiplication by any element of $G$ is well-defined on the set of left cosets $G/H$.
\end{proposition}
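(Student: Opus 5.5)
The plan is to derive the statement directly from normality, in the form $a^{-1}Ha = H$ (equivalently $Ha = aH$) for every $a \in G$. The argument is a short element chase: translate the hypothesis $gH = hH$ into a condition on $h^{-1}g$, and check that the same kind of condition holds for the pair $ga$, $ha$.

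First I will recall the standard equivalence for left cosets: $gH = hH$ if and only if $h^{-1}g \in H$. This holds without any normality assumption. So the hypothesis gives an element $k := h^{-1}g \in H$.

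Next, to prove $gaH = haH$ it suffices, by the same equivalence, to show that $(ha)^{-1}(ga) \in H$. I will compute
\[
(ha)^{-1}(ga) = a^{-1}h^{-1}ga = a^{-1}ka .
\]
This element lies in $a^{-1}Ha$, and normality of $H$ gives $a^{-1}Ha = H$. Hence $(ha)^{-1}(ga) \in H$, and therefore $gaH = haH$.

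To finish, I will read off the reformulation in the statement. Since the coset $gaH$ depends only on $gH$ and not on the chosen representative $g$, the assignment $(gH, a) \mapsto gaH$ is well defined. Associativity and the unit law follow immediately from those of $G$, so this is a right action of $G$ on $G/H$. There is no real obstacle here; the only place care is needed is to apply normality as conjugation by $a$ itself, rather than by $a^{-1}$. This does not actually matter, because normality is assumed for all elements of $G$.
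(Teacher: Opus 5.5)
Your proof is correct and is essentially the paper's argument: both convert $gH=hH$ into membership of $h^{-1}g$ in $H$ (the paper uses $g^{-1}h$), then conjugate by $a$ and use normality to get $(ha)^{-1}(ga)\in H$. Your added check that $(gH,a)\mapsto gaH$ is a genuine right action is also fine.
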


\begin{proof}
Assume $gH = hH$. Then $g^{-1}h \in H$. Since $H$ is normal, for any $a \in G$ we have
\[
a^{-1}(g^{-1}h)a \in H.
\]
But
\[
a^{-1}(g^{-1}h)a = (ga)^{-1}(ha).
\]
Thus $(ga)^{-1}(ha) \in H$, which implies
\[
gaH = haH.
\]
This proves the claim.
\end{proof}

For normal subgroups, the approximations $C_H^G$ are not just close to the cofree functor, but are naturally isomorphic to the $H$-trace of the cofree functor. This provides a concrete representation of the trace that will be essential for further categorical constructions.

\begin{proposition}\label{prop:CHG_iso}
Let $G$ be a group and $H$ a normal subgroup of $G$. Then
\[
C_H^G \cong \operatorname{Tr}_H \circ C_G,
\]
as functors from $\mathbf{Set}$ to $G\text{-}\mathbf{Set}$.
\end{proposition}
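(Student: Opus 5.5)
The plan is to identify $\operatorname{Tr}_H(C_G(X))$ explicitly as a subset of $X^G$, namely the set of functions $G \to X$ that are constant on the cosets of $H$. We then compare this subset with $C_H^G(X)=X^{G/H}$ by precomposing with the projection $\pi\colon G\to G/H$, $\pi(g)=gH$.

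\textbf{Step 1: equivariant maps out of $G/H$.} For any $G$-set $Y$, a $G$-equivariant map $\alpha\colon G/H\to Y$ is determined by $y=\alpha(H)$, and $y$ must satisfy $hy=y$ for all $h\in H$. Conversely, every $H$-fixed $y\in Y$ gives a well-defined equivariant map $aH\mapsto ay$. This is the same computation as in the proof of Proposition~\ref{prop:H_generated_criterion}. Hence
\[
\operatorname{Tr}_H(Y)=\{\,ay \mid a\in G,\ y\in Y^H\,\}.
\]
When $H$ is normal, $ay$ is fixed by $aHa^{-1}=H$, so in that case $\operatorname{Tr}_H(Y)=Y^H$.

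\textbf{Step 2: the $H$-fixed points of $C_G(X)$.} Take $Y=C_G(X)$. Arguing as in Proposition~\ref{prop:fixed_points_element}, $\phi$ is $H$-fixed if and only if $\phi(h^{-1}g)=\phi(g)$ for all $h\in H$ and $g\in G$. This says that $\phi$ is constant on each right coset $Hg$. By normality $Hg=gH$, so $\operatorname{Tr}_H(C_G(X))$ is exactly the set of $\phi\in X^G$ that are constant on every left coset $gH$. I expect this identification, and in particular the use of normality twice (once in Step 1, once here), to be the main point needing care.

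\textbf{Step 3: the natural isomorphism.} Define $\Psi_X\colon C_H^G(X)\to \operatorname{Tr}_H(C_G(X))$ by $\Psi_X(\psi)=\psi\circ\pi$.
\begin{itemize}
\item The image lies in the functions constant on left cosets, so by Step 2 it lands in $\operatorname{Tr}_H(C_G(X))$.
\item $\Psi_X$ is injective because $\pi$ is surjective.
\item $\Psi_X$ is surjective because every $\phi$ constant on left cosets factors uniquely through $\pi$.
\item $\Psi_X$ is equivariant, since
\[
(a\psi)(\pi(g))=\psi(a^{-1}gH)=(\psi\circ\pi)(a^{-1}g)=(a(\psi\circ\pi))(g).
\]
\end{itemize}
For naturality, given $f\colon X\to Y$ we have $f\circ(\psi\circ\pi)=(f\circ\psi)\circ\pi$. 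The action of $\operatorname{Tr}_H\circ C_G$ on $f$ is the restriction of $C_G(f)$, which is composition with $f$, so the naturality square commutes. Therefore $\Psi$ is a natural isomorphism $C_H^G\cong\operatorname{Tr}_H\circ C_G$.
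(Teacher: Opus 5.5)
Your proof is correct. It uses the same comparison map as the paper: the paper's $\theta_X(\phi)(g)=\phi(gH)$ is your $\Psi_X(\psi)=\psi\circ\pi$. The difference is in how membership in the trace and surjectivity are checked.

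The paper never describes $\operatorname{Tr}_H(C_G(X))$ intrinsically. For membership it builds an explicit equivariant map $G/H\to C_G(X)$, $aH\mapsto a\psi$. For surjectivity it takes $\psi=\alpha(aH)$ for some equivariant $\alpha$ and uses Proposition~\ref{prop:normal_right_action} to show $\psi$ is constant on cosets.

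You instead prove the general lemma $\operatorname{Tr}_H(Y)=G\cdot Y^H$ for every $G$-set $Y$, which collapses to $Y^H$ when $H$ is normal, and then compute $C_G(X)^H$ directly. After that, membership, injectivity and surjectivity all reduce to one fact: the functions constant on the cosets of $H$ are exactly those that factor through $\pi$.

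What your route buys:
\begin{itemize}
\item It isolates exactly the two places where normality enters.
\item It gives a description of $\operatorname{Tr}_H\circ C_G$ that still makes sense for non-normal $H$.
\item It avoids the inverse bookkeeping the paper's element-wise argument needs. For instance, in the paper's surjectivity step $(g\psi)(e)$ equals $\psi(g^{-1})$, not $\psi(g)$. Repairing this requires the extra observation that $gH=hH$ if and only if $g^{-1}H=h^{-1}H$ when $H$ is normal.
\end{itemize}

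In exchange, the paper's approach stays closer to the definition of the trace and exhibits the witnessing maps $G/H\to C_G(X)$ explicitly.
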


\begin{proof}
For each set $X$, we define a map
\[
\theta_X \colon C_H^G(X) \longrightarrow \operatorname{Tr}_H(C_G(X))
\]
by
\[
\theta_X(\phi)(g) = \phi(gH),
\]
for $\phi \in C_H^G(X)$ and $g \in G$.

We first verify that $\theta_X(\phi)$ indeed lies in $\operatorname{Tr}_H(C_G(X))$. Let $\psi = \theta_X(\phi)$. Define a map
\[
\alpha \colon G/H \longrightarrow C_G(X), \qquad \alpha(aH) = a \psi,
\]
where the action on the right is the usual $G$-action on $C_G(X)$, i.e., $(a \psi)(g) = \psi(a^{-1}g)$.

We claim that $\alpha$ is well-defined. Suppose $aH = bH$. Then $a^{-1}b \in H$. For any $g \in G$, we have
\[
(a \psi)(g) = \psi(a^{-1}g) = \phi(a^{-1}gH).
\]
Since $H$ is normal, by Proposition~\ref{prop:normal_right_action}, the equality $aH = bH$ implies that $a^{-1}gH = b^{-1}gH$ for all $g \in G$. Hence
\[
\phi(a^{-1}gH) = \phi(b^{-1}gH) = \psi(b^{-1}g) = (b \psi)(g).
\]
Thus $a \psi = b \psi$, so $\alpha$ is well-defined. Moreover, $\alpha$ is $G$-equivariant because for any $x \in G$ and $aH \in G/H$,
\[
\alpha(xaH) = (xa) \psi = x (a \psi) = x \alpha(aH).
\]
Since $\alpha(H) = \psi$, we have $\psi \in \operatorname{im}(\alpha) \subseteq \operatorname{Tr}_H(C_G(X))$. Hence $\theta_X$ is well-defined.

Next, we prove that $\theta_X$ is $G$-equivariant. For $\phi \in C_H^G(X)$, $a, g \in G$,
\[
\theta_X(g \phi)(g) = (g \phi)(gH) = \phi(g^{-1}gH) = \theta_X(\phi)(g^{-1}g) = (g \theta_X(\phi))(g).
\]
Thus $\theta_X(g \phi) = g \theta_X(\phi)$.

We now show that $\theta_X$ is injective. If $\theta_X(\phi) = \theta_X(\psi)$, then for any $gH \in G/H$,
\[
\phi(gH) = \theta_X(\phi)(g) = \theta_X(\psi)(g) = \psi(gH),
\]
so $\phi = \psi$.

It remains to prove surjectivity. Let $\psi \in \operatorname{Tr}_H(C_G(X))$. By definition, there exists a $G$-equivariant map $\alpha \colon G/H \longrightarrow C_G(X)$ such that $\psi \in \operatorname{im}(\alpha)$. Pick $aH \in G/H$ with $\psi = \alpha(aH)$. Define
\[
\phi \colon G/H \longrightarrow X, \qquad \phi(gH) = \psi(g).
\]
We must show that $\phi$ is well-defined. Suppose $gH = hH$. Then, using the equivariance of $\alpha$,
\[
g \psi = g \alpha(aH) = \alpha(gaH),
\]
and similarly
\[
h \psi = \alpha(haH).
\]
By Proposition~\ref{prop:normal_right_action}, since $gH = hH$, we have $gaH = haH$. Hence $g \psi = h \psi$. Evaluating these equal functions at the identity element $e \in G$, we obtain
\[
\psi(g) = (g \psi)(e) = (h \psi)(e) = \psi(h).
\]
Thus $\phi(gH) = \phi(hH)$, so $\phi$ is well-defined. Finally, for any $g \in G$,
\[
\theta_X(\phi)(g) = \phi(gH) = \psi(g),
\]
so $\theta_X(\phi) = \psi$. Hence $\theta_X$ is surjective, and therefore bijective.

Finally, we verify that $\theta$ is a natural transformation. Let $f \colon X \longrightarrow Y$ be a map of sets. For $\phi \in C_H^G(X)$ and $g \in G$,
\[
\theta_Y(C_H^G(f)(\phi))(g)
= C_H^G(f)(\phi)(gH)
= f(\phi(gH))
= f(\theta_X(\phi)(g))
= C_G(f)(\theta_X(\phi))(g)
= \operatorname{Tr}_H(C_G(f))(\theta_X(\phi))(g),
\]
where the last equality holds because $\operatorname{Tr}_H$ is a functor (Proposition~\ref{prop:tr_preradical}) and restricts the image of $C_G(f)$ to the trace. Thus the required naturality square commutes.

Therefore, $\theta$ is a natural isomorphism between the functors $C_H^G$ and $\operatorname{Tr}_H \circ C_G$.
\end{proof}

We now combine all the previous results to prove the main theorem of this section: the generalized cofree functor $C_H^G$ is exactly the right adjoint to the forgetful functor on the category of $H$-generated $G$-sets.

\begin{proposition}\label{prop:CHG_adjoint}
Let $G$ be a group and $H$ a normal subgroup of $G$. Then the corestricted functor
\[
C_H^G \colon \mathbf{Set} \longrightarrow G\text{-}\mathbf{Set}_H
\]
(which exists by Proposition~\ref{prop:CHG_iso}) is the right adjoint of the underlying functor
\[
U_H \colon G\text{-}\mathbf{Set}_H \longrightarrow \mathbf{Set},
\]
which forgets the $G$-action.
\end{proposition}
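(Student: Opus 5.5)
Compose two adjunctions. Write $J\colon G\text{-}\mathbf{Set}_H \hookrightarrow G\text{-}\mathbf{Set}$ for the full inclusion; then $U_H = U_G \circ J$. By Proposition~\ref{prop:tr_coreflector}, $\operatorname{Tr}_H$, corestricted to $G\text{-}\mathbf{Set}_H$, is right adjoint to $J$. By Proposition~\ref{prop:cofree_adjoint}, $C_G$ is right adjoint to $U_G$. Adjunctions compose, so $U_G J \dashv \operatorname{Tr}_H C_G$. Proposition~\ref{prop:CHG_iso} gives a natural isomorphism $\theta\colon C_H^G \cong \operatorname{Tr}_H\circ C_G$, and a functor naturally isomorphic to a right adjoint is again a right adjoint. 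This gives $U_H \dashv C_H^G$.

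First I will make sure the corestriction is legitimate. By Proposition~\ref{prop:tr_idempotent}, $\operatorname{Tr}_H(\operatorname{Tr}_H(Y)) = \operatorname{Tr}_H(Y)$, so Proposition~\ref{prop:H_generated_trace_criterion} shows that $\operatorname{Tr}_H(Y)$ is $H$-generated for every $G$-set $Y$. Transporting along $\theta_X$, which is a $G$-equivariant bijection, shows that $C_H^G(X)$ is $H$-generated. Hence both functors land in $G\text{-}\mathbf{Set}_H$, and since this is a full subcategory, $\theta$ remains a natural isomorphism there.

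The chain of bijections, natural in $X \in G\text{-}\mathbf{Set}_H$ and in a set $Y$, will be
\[
\operatorname{Hom}_{\mathbf{Set}}(U_H X, Y) \cong \operatorname{Hom}_{G\text{-}\mathbf{Set}}(X, C_G Y) \cong \operatorname{Hom}_{G\text{-}\mathbf{Set}_H}(X, \operatorname{Tr}_H C_G Y) \cong \operatorname{Hom}_{G\text{-}\mathbf{Set}_H}(X, C_H^G Y).
\]
The first bijection is $\theta_{XY}$ from Section~\ref{sec:cofree}. The second is the universal property of the coreflection in Proposition~\ref{prop:tr_coreflector}, which applies because $X$ is $H$-generated. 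The third is composition with $\theta_Y^{-1}$. Naturality of each step follows from the naturality of the individual adjunctions and of $\theta$.

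For the reader's benefit I will also make the unit explicit. For $X$ $H$-generated it is $x \mapsto (gH \mapsto g^{-1}x)$; this is exactly the naive formula whose failure was noted before Definition~\ref{def:H_generated}. To see why it is well defined here, note that $x = \alpha(aH)$ for some equivariant $\alpha$. Normality of $H$ gives that every $h\in H$ fixes $aH$, since $h a H = a(a^{-1}ha)H = aH$, so $H$ acts trivially on $X$. This explicit check is the only point I expect to be delicate. The formal composition argument already handles it, so I will present it only as a remark.
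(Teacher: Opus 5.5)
Your proof is correct and follows the paper's argument: compose $U_G \dashv C_G$ with the coreflection $J \dashv \operatorname{Tr}_H$ to get $U_H = U_G J \dashv \operatorname{Tr}_H \circ C_G$, then replace $\operatorname{Tr}_H \circ C_G$ by the naturally isomorphic $C_H^G$ via Proposition~\ref{prop:CHG_iso}. Your extra check that $\operatorname{Tr}_H(Y)$ is $H$-generated (via Propositions~\ref{prop:tr_idempotent} and~\ref{prop:H_generated_trace_criterion}), which the paper leaves implicit, is correct, and so is your remark that normality makes $H$ act trivially on $H$-generated $G$-sets, so that the unit $x \mapsto (gH \mapsto g^{-1}x)$ is well defined.
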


\begin{proof}
Let
\[
U \colon G\text{-}\mathbf{Set} \longrightarrow \mathbf{Set}
\]
denote the usual forgetful functor, and let
\[
I_H \colon G\text{-}\mathbf{Set}_H \longrightarrow G\text{-}\mathbf{Set}
\]
be the inclusion functor of the full subcategory of $H$-generated $G$-sets.

By Proposition~\ref{prop:cofree_adjoint}, we have the adjunction
\[
U \dashv C_G,
\]
where $C_G \colon \mathbf{Set} \longrightarrow G\text{-}\mathbf{Set}$ is the cofree functor.

Furthermore, by Proposition~\ref{prop:tr_coreflector}, the subcategory $G\text{-}\mathbf{Set}_H$ is coreflective in $G\text{-}\mathbf{Set}$, with coreflector $\operatorname{Tr}_H$. Hence we have the adjunction
\[
I_H \dashv \operatorname{Tr}_H,
\]
where $\operatorname{Tr}_H \colon G\text{-}\mathbf{Set} \longrightarrow G\text{-}\mathbf{Set}_H$ is the $H$-trace functor.

Composing these two adjunctions, we obtain
\[
U \circ I_H \dashv \operatorname{Tr}_H \circ C_G.
\]

Now observe that $U \circ I_H = U_H$, since both functors first forget the $G$-action and then restrict to the subcategory. Therefore,
\[
U_H \dashv \operatorname{Tr}_H \circ C_G.
\]

Finally, by Proposition~\ref{prop:CHG_iso}, we have a natural isomorphism
\[
C_H^G \cong \operatorname{Tr}_H \circ C_G.
\]
Thus we may replace the right adjoint $\operatorname{Tr}_H \circ C_G$ by the naturally isomorphic functor $C_H^G$ (corestricted to $G\text{-}\mathbf{Set}_H$). Consequently,
\[
U_H \dashv C_H^G.
\]
This proves that $C_H^G$ is the right adjoint of the underlying functor $U_H$.
\end{proof}

\section{Comonads and the classification of groups}
\label{sec:comonads}

In this final section we study the comonad induced on the category of sets by the cofree adjunction. Our aim is to show that the assignment $G \mapsto K_G$ is a fully faithful contravariant functor from the category of groups to the category of comonads on $\mathbf{Set}$. In particular, the cofree comonad determines the group up to isomorphism, and the essential image forms a full subcategory equivalent to $\mathbf{Grp}^{\mathrm{op}}$.

We begin by recalling the standard construction. Let $\mathcal C$ and $\mathcal D$ be categories, and let $F \colon \mathcal C \rightarrow \mathcal D$ and $U \colon \mathcal D \rightarrow \mathcal C$ be functors with $F \dashv U$.  Denote the unit by $\eta \colon 1_{\mathcal C} \Rightarrow U F$ and the counit by $\varepsilon \colon F U \Rightarrow 1_{\mathcal D}$. Then the endofunctor $T := U F \colon \mathcal C \rightarrow \mathcal C$ carries a canonical comonad structure, called the \emph{comonad induced by the adjunction}. Its counit is simply $\epsilon := \varepsilon \colon T \Rightarrow 1_{\mathcal C}$, and its comultiplication is given by
\[
\delta := U \eta F \colon T \Rightarrow T^2,
\]
with components $\delta_X = U(\eta_{F(X)})$. The coassociativity and counit axioms follow directly from the triangle identities of the adjunction.

We now apply this general construction to the adjunction $U_G \dashv C_G$ studied in Section~\ref{sec:cofree}. The induced comonad on $\mathbf{Set}$ will be denoted by
\[
K_G := U_G C_G.
\]
For a set $X$, we have $K_G(X) = U_G(C_G(X)) = X^G$, the set of all functions from $G$ to $X$.

\begin{definition}
For a group $G$, the comonad $K_G = (K_G, \epsilon^G, \delta^G)$ on $\mathbf{Set}$ is defined as follows. The endofunctor $K_G$ sends a set $X$ to the set of functions $X^G$, and for a map $f: X \rightarrow Y$, it acts by post-composition: $K_G(f)(\phi) = f \circ \phi$ for each $\phi \in X^G$. The counit $\epsilon_X^G: X^G \rightarrow X$ is given by evaluation at the identity element of $G$, that is, $\epsilon_X^G(\phi) = \phi(e_G)$ for all $\phi \in X^G$. Finally, the comultiplication $\delta_X^G: X^G \rightarrow (X^G)^G$ is defined pointwise by
\[
\delta_X^G(\phi)(g)(h) = \phi(gh),
\]
for all $\phi \in X^G$ and all $g, h \in G$.
\end{definition}

We briefly derive the formula for the comultiplication. The unit of the adjunction $U_G \dashv C_G$ at a $G$-set $Y$ is
\[
\eta^C_Y \colon Y \rightarrow Y^G, \qquad \eta^C_Y(y)(g) = g^{-1}y,
\]
where the dot denotes the action of $G$ on $Y$. 
Taking $Y = C_G(X) = X^G$, we have for $\phi \in X^G$ and $g,h \in G$:
\[
\eta^C_{C_G(X)}(\phi)(g)(h) = (g^{-1}\phi)(h) = \phi((g^{-1})^{-1}h) = \phi(g h).
\]
Since $\delta^G_X = U_G(\eta^C_{C_G(X)})$, the displayed formula follows.
 
The assignment $G \mapsto K_G$ is not merely a correspondence on objects; it extends to a functor in a natural way.

\begin{proposition}\label{prop:functoriality_comonad}
The rule $G \mapsto K_G$ defines a functor
\[
\Phi \colon \mathbf{Grp}^{\mathrm{op}} \longrightarrow \mathbf{Comonads}(\mathbf{Set}).
\]
More precisely, for a group morphism $f \colon G \longrightarrow H$, the induced comonad morphism
\[
\Phi(f) \colon K_H \Longrightarrow K_G
\]
has components
\[
\Phi(f)_X(\alpha) = \alpha \circ f,
\qquad 
\alpha \in K_H(X) = X^H,\; X \in \mathbf{Set}.
\]
\end{proposition}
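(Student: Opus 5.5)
The plan is to verify directly, from the explicit formulas in the preceding definition, that each $\Phi(f)$ is a comonad morphism $K_H \Rightarrow K_G$ and that the assignment respects identities and composition. Throughout, $f \colon G \to H$ is a group homomorphism, and we use $\Phi(f)_X(\alpha) = \alpha \circ f$ for $\alpha \in X^H$.

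\textbf{Step 1: naturality.} For a map $u \colon X \to Y$ and $\alpha \in X^H$, post-composition by $u$ commutes with pre-composition by $f$. Indeed,
\[
K_G(u)(\Phi(f)_X(\alpha)) = u\circ\alpha\circ f = \Phi(f)_Y(K_H(u)(\alpha)).
\]
So $\Phi(f)$ is a natural transformation $K_H \Rightarrow K_G$.

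\textbf{Step 2: counit compatibility.} We need $\epsilon^G_X \circ \Phi(f)_X = \epsilon^H_X$. This reduces to
\[
(\alpha\circ f)(e_G) = \alpha(f(e_G)) = \alpha(e_H),
\]
which holds because a homomorphism preserves the identity.

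\textbf{Step 3: comultiplication compatibility.} This is the only point requiring care. The horizontal composite $\Phi(f)\Phi(f) \colon K_H^2 \Rightarrow K_G^2$ has component
\[
(\Phi(f)\Phi(f))_X = \Phi(f)_{K_G X}\circ K_H(\Phi(f)_X),
\]
or equivalently the other order by the interchange law. On $\Psi \in (X^H)^H$ it sends $\Psi$ to the function $g \mapsto \Psi(f(g)) \circ f$. The component at $g$ of this is $h \mapsto \Psi(f(g))(f(h))$ for $g,h\in G$.

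It then suffices to compare both sides of the required square on $\alpha \in X^H$.
\begin{itemize}
\item Going through $\delta^H$ first and then $\Phi(f)\Phi(f)$ gives $(g,h) \mapsto \alpha(f(g)f(h))$.
\item Going through $\Phi(f)$ first and then $\delta^G$ gives $(g,h) \mapsto (\alpha\circ f)(gh) = \alpha(f(gh))$.
\end{itemize}
These agree precisely because $f$ is multiplicative.

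I expect the main obstacle to be bookkeeping rather than mathematics. One must write out the horizontal composite correctly and check that both whiskerings agree, which holds by naturality from Step 1.

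\textbf{Step 4: functoriality.} For $f=1_G$ we get $\Phi(1_G)_X(\alpha)=\alpha$, so $\Phi(1_G)$ is the identity. For $f\colon G\to H$ and $k\colon H\to L$ we have
\[
\Phi(k\circ f)_X(\alpha)=\alpha\circ k\circ f=\Phi(f)_X(\Phi(k)_X(\alpha)).
\]
Hence $\Phi(kf)=\Phi(f)\circ\Phi(k)$, confirming that $\Phi$ is a contravariant functor $\mathbf{Grp}^{\mathrm{op}}\to\mathbf{Comonads}(\mathbf{Set})$.
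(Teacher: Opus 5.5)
Your proposal is correct and follows essentially the same route as the paper's proof: naturality by commuting pre- and post-composition, counit compatibility from $f(e_G)=e_H$, comultiplication compatibility from $f(gh)=f(g)f(h)$, and then functoriality. You also spell out explicitly the horizontal composite $\Phi(f)\Phi(f)$ and the identity/composition checks, which the paper only asserts as immediate.
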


\begin{proof}
We first check that $\Phi(f)$ is a natural transformation. For any map $u \colon X \rightarrow Y$ and any $\alpha \in X^H$,
\[
K_G(u)(\Phi(f)_X(\alpha)) = u \circ (\alpha \circ f) = (u \circ \alpha) \circ f = \Phi(f)_Y(K_H(u)(\alpha)),
\]
so the naturality square commutes.

Next, we verify that $\Phi(f)$ respects the comonad structure. 
For the counit, take $\alpha \in X^H$:
\[
\epsilon^G_X(\Phi(f)_X(\alpha)) = (\alpha \circ f)(e_G) = \alpha(f(e_G)) = \alpha(e_H) = \epsilon^H_X(\alpha),
\]
because $f$ is a group morphism. For the comultiplication, we need
\[
\delta^G \circ \Phi(f) = \bigl( K_G(\Phi(f)) \circ \Phi(f)_{K_H} \bigr) \circ \delta^H.
\]
Fix $X$, $\alpha \in X^H$, and $g_1,g_2 \in G$. 
The left-hand side evaluated at these elements gives
\[
\delta^G_X(\Phi(f)_X(\alpha))(g_1)(g_2) = \Phi(f)_X(\alpha)(g_1 g_2) = \alpha(f(g_1 g_2)).
\]
On the right-hand side we obtain
\[
\delta^H_X(\alpha)(f(g_1))(f(g_2)) = \alpha(f(g_1) f(g_2)).
\]
Since $f$ is a morphism, $f(g_1 g_2) = f(g_1)f(g_2)$, and hence the two sides coincide. Therefore $\Phi(f)$ is a morphism of comonads. The functoriality axioms (preservation of identities and composition) follow immediately from the definitions.
\end{proof}

We now isolate the image of the functor $\Phi$.

\begin{definition}
Let $\mathbf{GrpComonads}$ denote the full subcategory of $\mathbf{Comonads}(\mathbf{Set})$ whose objects are comonads isomorphic to $K_G$ for some group $G$. We call this the \emph{subcategory of group comonads}.
\end{definition}

The main result of this section is that $\Phi$ induces an equivalence between $\mathbf{Grp}^{\mathrm{op}}$ and $\mathbf{GrpComonads}$. We first establish faithfulness.

\begin{proposition}\label{prop:faithfulness}
The functor $\Phi \colon \mathbf{Grp}^{\mathrm{op}} \rightarrow \mathbf{Comonads}(\mathbf{Set})$ is faithful.
\end{proposition}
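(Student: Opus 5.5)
To show that $\Phi$ is faithful, I take two group morphisms $f_1, f_2 \colon G \to H$ with $\Phi(f_1) = \Phi(f_2)$ as comonad morphisms $K_H \Rightarrow K_G$, and I aim to conclude $f_1 = f_2$. The plan is to test the equality of natural transformations on a single, well-chosen component. The natural candidate is the set $X = H$ itself, with the identity map $1_H \in H^H = K_H(H)$. By Proposition~\ref{prop:functoriality_comonad}, the components are
\[
\Phi(f_i)_H(1_H) = 1_H \circ f_i = f_i \quad (i=1,2),
\]
viewed as elements of $H^G = K_G(H)$. Equality of the components at $H$ therefore forces $f_1 = f_2$ as functions $G \to H$, hence as group morphisms. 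This is essentially a Yoneda-type argument: $K_H = \operatorname{Hom}_{\mathbf{Set}}(H, -)$ is representable, so a transformation out of it is determined by where it sends the identity.

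Alternatively, if one prefers not to use $X = H$, one can take any set $X$ and any injective map $\alpha \colon H \to X$. Then $\alpha \circ f_1 = \alpha \circ f_2$ again forces $f_1 = f_2$, since $\alpha$ is left-cancellable. Either version works; I would present the first, since it is the most direct.

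There is no genuine obstacle here; the only care needed concerns the variance. Because $\Phi$ is defined on $\mathbf{Grp}^{\mathrm{op}}$, faithfulness means that the map $\operatorname{Hom}_{\mathbf{Grp}}(G,H) \to \operatorname{Hom}(K_H, K_G)$, $f \mapsto \Phi(f)$, is injective, and this is exactly what the argument above shows. The comonad axioms are not needed for faithfulness; they will matter only for fullness in the next proposition, where one must recover a group morphism from an arbitrary comonad morphism, via Yoneda and then compatibility with $\epsilon$ and $\delta$.
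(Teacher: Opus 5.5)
Your proposal is correct and is essentially the paper's own proof: the paper also evaluates the equal components at $X = H$ with $\alpha = \mathrm{id}_H$, obtaining $\Phi(f_1)_H(\mathrm{id}_H) = f_1$ and $\Phi(f_2)_H(\mathrm{id}_H) = f_2$, so $f_1 = f_2$. Your side remarks, namely the injective-map variant and the observation that the comonad axioms play no role in faithfulness, are accurate but not needed.
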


\begin{proof}
Suppose $f_1, f_2 \colon G \rightarrow H$ are group morphisms such that $\Phi(f_1) = \Phi(f_2)$. 
Then for every set $X$ and every $\alpha \in X^H$, we have $\alpha \circ f_1 = \alpha \circ f_2$. 
Taking $X = H$ and $\alpha = \mathrm{id}_H$, we obtain $f_1 = f_2$. 
Thus $\Phi$ is faithful.
\end{proof}

The next proposition is the key step: it shows that every morphism of comonads between cofree comonads comes from a unique group morphism.

\begin{proposition}\label{prop:fullness}
The functor $\Phi$ is full. 
More precisely, if $\alpha \colon K_H \Longrightarrow K_G$ is any morphism of comonads, then there exists a unique group morphism $f \colon G \longrightarrow H$ such that $\alpha = \Phi(f)$.
\end{proposition}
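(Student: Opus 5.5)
The plan is a Yoneda argument, since $K_H = \operatorname{Hom}_{\mathbf{Set}}(H,-)$ is representable.

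\textbf{Step 1: natural transformations are precompositions.} By the Yoneda lemma, natural transformations $K_H = \operatorname{Hom}(H,-) \Rightarrow \operatorname{Hom}(G,-) = K_G$ correspond bijectively to elements of $\operatorname{Hom}(G,H)$, i.e.\ to arbitrary maps of sets $f \colon G \to H$. Given $\alpha$, define
\[
f := \alpha_H(\mathrm{id}_H) \in H^G .
\]
Naturality, applied to the map $\beta \colon H \to X$ viewed as a morphism of $\mathbf{Set}$, gives
\[
\alpha_X(\beta) = \alpha_X(\beta \circ \mathrm{id}_H) = \beta \circ \alpha_H(\mathrm{id}_H) = \beta \circ f .
\]
So $\alpha_X(\beta) = \beta\circ f$ for all $X$ and $\beta$, which is the formula for $\Phi(f)$ from Proposition~\ref{prop:functoriality_comonad}. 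Uniqueness of $f$ is exactly Proposition~\ref{prop:faithfulness}, or can be read off by taking $X = H$ and $\beta = \mathrm{id}_H$.

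\textbf{Step 2: the counit axiom.} It remains to show that $f$ is a group morphism; this is where the comonad axioms are used. The counit condition $\epsilon^G \circ \alpha = \epsilon^H$ at $X = H$, applied to $\mathrm{id}_H$, gives
\[
f(e_G) = \mathrm{id}_H(e_H) = e_H .
\]

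\textbf{Step 3: the comultiplication axiom.} This is the main obstacle, because it requires correctly unwinding the horizontal composite $\alpha\alpha$. At a set $X$ we have
\[
(\alpha\alpha)_X = K_G(\alpha_X) \circ \alpha_{K_H X}.
\]
For $\Psi \in (X^H)^H$, the inner map gives $\alpha_{K_H X}(\Psi) = \Psi \circ f$. Applying $K_G(\alpha_X)$ then yields
\[
g_1 \longmapsto \alpha_X\bigl(\Psi(f(g_1))\bigr) = \Psi(f(g_1)) \circ f .
\]
Now take $X = H$ and $\beta = \mathrm{id}_H$, and evaluate both sides of $\delta^G \circ \alpha = (\alpha\alpha) \circ \delta^H$ at $(g_1, g_2)$. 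The left side gives
\[
\delta^G_H(f)(g_1)(g_2) = f(g_1 g_2).
\]
The right side gives
\[
\delta^H_H(\mathrm{id}_H)\bigl(f(g_1)\bigr)\bigl(f(g_2)\bigr) = f(g_1)f(g_2).
\]
This is the same computation as in the proof of Proposition~\ref{prop:functoriality_comonad}, run in reverse. Equating the two sides shows that $f$ is multiplicative, hence a group morphism, and therefore $\alpha = \Phi(f)$.
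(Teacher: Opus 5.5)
Your proposal is correct and follows the paper's proof. Yoneda identifies $\alpha$ with precomposition by a map $f\colon G\to H$, and the comultiplication axiom at $X=H$, $\psi=\mathrm{id}_H$ gives $f(g_1g_2)=f(g_1)f(g_2)$; your use of the counit axiom is harmless but redundant, since in a group multiplicativity alone forces $f(e_G)=e_H$, which is why the paper omits it.
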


\begin{proof}
For every set $X$, the component $\alpha_X \colon X^H \rightarrow X^G$ is a natural transformation between the representable functors $\operatorname{Hom}(-, H)$ and $\operatorname{Hom}(-, G)$. By the Yoneda lemma, there exists a unique function $\beta \colon G \rightarrow H$ such that, for every set $X$ and every $\psi \in X^H$,
\[
\alpha_X(\psi) = \psi \circ \beta.
\]
We shall prove that $\beta$ is a group morphism by showing that it preserves multiplication.

We use the comultiplication condition for comonad morphisms:
\[
\delta^G \circ \alpha = \bigl( K_G(\alpha) \circ \alpha_{K_H} \bigr) \circ \delta^H.
\]
We evaluate both sides at an arbitrary set $X$, a function $\psi \in X^H$, and elements $g_1, g_2 \in G$.

For the left-hand side, we have
\[
\delta^G_X(\alpha_X(\psi))(g_1)(g_2)
= \alpha_X(\psi)(g_1 g_2)
= \psi(\beta(g_1 g_2)).
\]

For the right-hand side, set $\phi = \alpha_{K_H(X)}(\delta^H_X(\psi)) \in K_G(K_H(X))$. 
For any $g \in G$, we get
\[
\phi(g) = \delta^H_X(\psi)(\beta(g)) \in X^H,
\]
which is the function $h \mapsto \psi(\beta(g) h)$. 
Now, applying $K_G(\alpha_X)$ to $\phi$ and evaluating at $g_1, g_2$ yields
\[
\begin{aligned}
\bigl(K_G(\alpha_X)(\phi)\bigr)(g_1)(g_2)
&= \alpha_X(\phi(g_1))(g_2) \\
&= \phi(g_1)(\beta(g_2)) \\
&= \delta^H_X(\psi)(\beta(g_1))(\beta(g_2)) \\
&= \psi(\beta(g_1)\beta(g_2)).
\end{aligned}
\]
Thus the right-hand side equals $\psi(\beta(g_1)\beta(g_2))$.

Comparing both expressions gives
\[
\psi(\beta(g_1 g_2)) = \psi(\beta(g_1)\beta(g_2))
\]
for all $X$, all $\psi \in X^H$, and all $g_1, g_2 \in G$. Taking $X = H$ and $\psi = \mathrm{id}_H$, we obtain the multiplicativity condition:
\[
\beta(g_1 g_2) = \beta(g_1)\beta(g_2). \tag{1}
\]

Hence $\beta$ is a group morphism $G \to H$. By construction, $\alpha = \Phi(\beta)$. Therefore, $\Phi$ is full.
\end{proof}

Combining the propositions above yields the announced equivalence.

\begin{corollary}\label{cor:full_embedding}
The functor $\Phi$ induces an equivalence of categories
\[
\mathbf{Grp}^{\mathrm{op}} \simeq \mathbf{GrpComonads}.
\]
In other words, the assignment $G \mapsto K_G$ is a fully faithful contravariant embedding of the category of groups into the category of comonads on $\mathbf{Set}$.
\end{corollary}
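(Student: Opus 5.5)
The corollary combines Propositions~\ref{prop:functoriality_comonad}, \ref{prop:faithfulness} and \ref{prop:fullness} with the definition of $\mathbf{GrpComonads}$. I will use the standard criterion that a functor is an equivalence if and only if it is fully faithful and essentially surjective.

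First, $\Phi$ lands in $\mathbf{GrpComonads}$. Every $K_G$ is by definition isomorphic to itself, hence lies in this full subcategory. So $\Phi$ corestricts to a functor $\Phi' \colon \mathbf{Grp}^{\mathrm{op}} \to \mathbf{GrpComonads}$. Because $\mathbf{GrpComonads}$ is a \emph{full} subcategory, its hom-sets coincide with those of $\mathbf{Comonads}(\mathbf{Set})$. Therefore $\Phi'$ is faithful by Proposition~\ref{prop:faithfulness} and full by Proposition~\ref{prop:fullness}.

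Concretely, a morphism $G \to H$ in $\mathbf{Grp}^{\mathrm{op}}$ is a group morphism $f \colon H \to G$, and $\Phi(f) \colon K_G \Rightarrow K_H$. The bijection
\[
\operatorname{Hom}_{\mathbf{Grp}}(H,G) \to \operatorname{Hom}_{\mathbf{Comonads}}(K_G,K_H)
\]
is injective by the Yoneda argument (take $X = G$ and the identity) and surjective by the multiplicativity computation~(1). One step needs care here: the function $\beta$ from Proposition~\ref{prop:fullness} must preserve the identity as well as products. This follows either from the counit condition $\epsilon^G \circ \alpha = \epsilon^H$ evaluated at $X = H$, $\psi = \mathrm{id}_H$, or directly from multiplicativity in a group.

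Essential surjectivity is immediate. Any object of $\mathbf{GrpComonads}$ is isomorphic to some $K_G = \Phi'(G)$ by the very definition of the subcategory.

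Hence $\Phi'$ is fully faithful and essentially surjective, and therefore an equivalence. Composing with the full inclusion $\mathbf{GrpComonads} \hookrightarrow \mathbf{Comonads}(\mathbf{Set})$ shows that $\Phi$ itself is a fully faithful embedding. If "embedding" is also meant to include injectivity on objects up to the stated phrasing, this is not needed for the equivalence. Fully faithfulness already gives $K_G \cong K_H$ as comonads if and only if $G \cong H$, since full faithfulness reflects isomorphisms.

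The only genuine obstacle is checking that the hypotheses of the equivalence criterion match exactly what the earlier results give. In particular, fullness must be taken in the full subcategory and not merely in $\mathbf{Comonads}(\mathbf{Set})$; that transfer is trivial because the subcategory is full.
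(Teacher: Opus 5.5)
Your proposal is correct and is the same argument as the paper's one-line proof. Full faithfulness of the corestriction comes from Propositions~\ref{prop:faithfulness} and~\ref{prop:fullness}, because $\mathbf{GrpComonads}$ is a full subcategory, and essential surjectivity holds by the definition of $\mathbf{GrpComonads}$; your check that $\beta(e_G)=e_H$ (via the counit condition or cancellation in $H$) is a harmless detail the paper leaves implicit.
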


As an immediate consequence, the isomorphism problem for groups is faithfully encoded by their cofree comonads.

\begin{corollary}\label{cor:reconstruction}
Let $G$ and $H$ be groups. If the comonads $K_G$ and $K_H$ are isomorphic in $\mathbf{Comonads}(\mathbf{Set})$, then the groups $G$ and $H$ are isomorphic.
\end{corollary}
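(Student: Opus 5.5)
The plan is to derive Corollary~\ref{cor:reconstruction} formally from the full faithfulness of $\Phi$ (Propositions~\ref{prop:faithfulness} and~\ref{prop:fullness}). Fully faithful functors reflect isomorphisms, so the only work is checking that reflection explicitly in this contravariant setting.

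Let $\alpha \colon K_G \Rightarrow K_H$ be an isomorphism of comonads with inverse $\alpha' \colon K_H \Rightarrow K_G$. The inverse natural transformation is automatically a comonad morphism: the counit and comultiplication squares for $\alpha'$ follow from those for $\alpha$ by pre- and post-composing with $\alpha'$ and $\alpha'\alpha'$.

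By Proposition~\ref{prop:fullness}, there are group morphisms $f \colon H \to G$ and $f' \colon G \to H$ with $\alpha = \Phi(f)$ and $\alpha' = \Phi(f')$. Since $\Phi$ is a functor on $\mathbf{Grp}^{\mathrm{op}}$ (Proposition~\ref{prop:functoriality_comonad}), we get
\[
\Phi(f' \circ f) = \Phi(f) \circ \Phi(f') = \alpha \circ \alpha' = 1_{K_H} = \Phi(1_H),
\]
and symmetrically $\Phi(f \circ f') = \alpha' \circ \alpha = 1_{K_G} = \Phi(1_G)$. Here I should double-check the order of composition; concretely, $\Phi(f)_X(\psi) = \psi \circ f$ gives $\Phi(f)\Phi(f')(\psi) = \psi \circ f' \circ f$, consistent with the display.

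Faithfulness (Proposition~\ref{prop:faithfulness}) then yields $f' \circ f = 1_H$ and $f \circ f' = 1_G$. Hence $f$ is a group isomorphism and $G \cong H$.

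There is no real obstacle. The only points needing care are verifying that the inverse of a comonad isomorphism is a comonad morphism, which is a one-line diagram check, and keeping the contravariance straight. Alternatively, one can bypass the inverse check entirely. Apply Proposition~\ref{prop:fullness} to $\alpha$ and to $\alpha^{-1}$ separately, then observe directly via the Yoneda description $\alpha_X(\psi) = \psi \circ \beta$ that the composite function $\beta\beta'$ induces the identity transformation. Taking $X$ to be the relevant group and $\psi$ the identity map forces $\beta\beta' = \mathrm{id}$, and likewise $\beta'\beta = \mathrm{id}$.
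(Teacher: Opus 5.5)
Your proof is correct and is essentially the paper's argument: apply fullness to $\alpha$ and to its inverse, then use the contravariant functoriality of $\Phi$ together with faithfulness to show that the two resulting group morphisms are mutually inverse. Your bookkeeping of the composition order, $\Phi(f)\circ\Phi(f') = \Phi(f'\circ f)$, is in fact cleaner than the paper's display. The paper writes $\Phi(g\circ f)$ and concludes $g\circ f = \mathrm{id}_G$, where $\Phi(f\circ g)$ and $f\circ g = \mathrm{id}_G$ are meant.
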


\begin{proof}
Suppose $\alpha \colon K_G \rightarrow K_H$ is a comonad isomorphism. 
By fullness, there exists a group morphism $f \colon H \rightarrow G$ such that $\alpha = \Phi(f)$. 
Since $\alpha$ is an isomorphism, its inverse $\alpha^{-1} \colon K_H \rightarrow K_G$ corresponds, again by fullness, to a group morphism $g \colon G \rightarrow H$ with $\Phi(g) = \alpha^{-1}$. 
Then
\[
\Phi(g \circ f) = \Phi(g) \circ \Phi(f) = \alpha^{-1} \circ \alpha = \mathrm{id}_{K_G} = \Phi(\mathrm{id}_G).
\]
Faithfulness implies $g \circ f = \mathrm{id}_G$; similarly, $f \circ g = \mathrm{id}_H$. 
Thus $f$ and $g$ are inverse isomorphisms, so $G \cong H$.
\end{proof}

This completes the classification theorem. The cofree comonad $K_G$ captures the group $G$ up to isomorphism, and the entire categorical structure of groups is reflected in the comonads on sets.

\bibliography{biblio}
\bibliographystyle{amsplain}

\end{document}